\makeatletter\@addtoreset {equation}{section}\makeatother
\newtheorem{theorem}{Theorem}
\newtheorem{lemma}{Lemma}
\newtheorem{remark}{Remark}
\newtheorem{proposition}{Proposition}
\newenvironment{proof}{
    \noindent {\it Proof.}}{\hfill$\Box$
}
\begin{document}

\title{\bf Global solutions to the shallow-water system}

\author{Sergey N. Alexeenko$^1$, Marina V. Dontsova$^2$, and Dmitry E. Pelinovsky$^{1,3}$ \\
{\small $^1$ Department of Applied Mathematics, Nizhny Novgorod State Technical University} \\
{\small 24 Minin Street, Nizhny Novgorod, 603950, Russia} \\
{\small $^2$ Department of Applied Mathematics, Nizhny Novgorod State Pedagogicall University} \\
{\small 1 Ulyanova street, Nizhny Novgorod, 603950, Russia} \\
{\small $^3$ Department of Mathematics, McMaster University, Hamilton, Ontario, Canada, L8S 4K1}}

\date{\today}
\maketitle

\begin{abstract}
The classical system of shallow-water (Saint--Venant) equations describes long
surface waves in an inviscid incompressible fluid of a variable depth.
Although shock waves are expected in this quasilinear hyperbolic system for
a wide class of initial data, we find a sufficient condition on the initial data that guarantees existence of
a global classical solution continued from a local solution. The sufficient conditions
can be easily satisfied for the fluid flow propagating in one direction with two characteristic velocities
of the same sign and two monotonically increasing Riemann invariants. We prove that these properties
persist in the time evolution of the classical solutions to the shallow-water equations and provide no shock wave singularities formed
in a finite time over a half-line or an infinite line. On a technical side, we develop a novel method of an additional argument,
which allows to obtain local and global solutions to the quasilinear hyperbolic systems in physical rather than characteristic variables.
\end{abstract}

\section{Introduction}

The shallow water system arises in the dispersionless limit of Euler equations and describes
long waves on the surface of an inviscid incompressible fluid (e.g., water). We assume that the surface waves
are two-dimensional in the $(x,z)$-variables and that the fluid is located between the hard bottom of the varying depth
at $z = -h(x)$ and the free surface at $z = \eta(t,x)$, where $h$ is given and $\eta$ is unknown.

In the case of surface waves free of vorticity, the velocity vector of the fluid's motion
is given by the gradient of the velocity potential, which is found from the Laplace equation in
variables $(x,z)$. In the dispersionless limit, for which the horizontal length of wave motion is much larger compared to the
vertical length, the Euler equations reduce to the shallow water system
\begin{equation}
\label{shallow-water}
\left\{ \begin{array}{l}
\partial_t \eta + \partial_x \left[ u (h(x) + \eta) \right] = 0, \\
\partial_t u + u \partial_x u + g \partial_x \eta = 0, \end{array} \right.
\end{equation}
where $u(t,x)$ is the horizontal component of velocity at the free surface $z = \eta(t,x)$,
and $g$ is the gravitational constant. In what follows, we set $g = 1$ without loss of generality.

The shallow water system (\ref{shallow-water}), which is also known as the Saint--Venant equations,
is reviewed in many texts and monographs (see, e.g., Section 5.1.1 in \cite{Lannes}).
Recently, interest to the shallow-water system arises due to modeling of run-up of water waves towards the beach \cite{Diden2}.
In particular, when the bottom topography changes like $h(x) \sim x^{4/3}$, the waves propagating towards the beach
are free of reflections \cite{Diden1}.

Using the standard technique of Riemann invariants, one can diagonalize the quasilinear system (\ref{shallow-water})
in new coordinates
\begin{equation}
\label{RI}
z_{\pm}(t,x) := u(t,x) \pm 2\sqrt{h(x) + \eta(t,x)},
\end{equation}
which are real if $h(x) + \eta(t,x) > 0$. This constraint corresponds
to the hyperbolicity of the shallow-water system (\ref{shallow-water}) and, physically, to the nonzero depth
of the fluid flow over the variable bottom. Substitution of (\ref{RI}) into (\ref{shallow-water})
yields the system of symmetric quasilinear equations
\begin{equation}
\label{sym-system}
\left\{ \begin{array}{l}
\partial_t z_+ + \frac{1}{4} (3z_+ + z_-) \partial_x z_+ = h'(x), \\
\partial_t z_- + \frac{1}{4} (z_+ + 3 z_- ) \partial_x z_- = h'(x). \end{array} \right.
\end{equation}
The characteristic speeds of the system (\ref{sym-system}) are given by
\begin{equation}
\label{speeds}
c_{\pm} := \frac{1}{4} (3 z_{\pm} + z_{\mp}) = u \pm \sqrt{h(x) + \eta}.
\end{equation}
System (\ref{sym-system}) in Riemann invariants is well-known, see, e.g., Sections 5.7 and 13.10 in \cite{Whitham}.
Some explicit solutions can be obtained in the case $h'(x) = {\rm const}$
by using the hodograph transformation method, see, e.g., recent works \cite{Diden2,Rybkin} and references therein.
Review of exact solutions to the shallow water system can be found in Section 16.2.1 in \cite{Polyanin}.

The Cauchy problem is posed for the system (\ref{sym-system}) with initial data
\begin{equation}
\label{Cauchy-value}
z_{\pm}(0,x) = \varphi_{\pm}(x).
\end{equation}
If the initial data $\varphi_{\pm}$ are defined on the infinite
line in Sobolev spaces $H^s(\mathbb{R})$, then the Cauchy problem (\ref{sym-system}) and (\ref{Cauchy-value})
is known to be locally well-posed for $s > \frac{3}{2}$ \cite{Kato}.
The method of characteristics can be used in a local neighborhood of any point if the initial data $\varphi_{\pm}$
are $C^1$ functions near this point and their first derivatives are Lipschitz continuous \cite{CourantLax}.

It is easy to recover the solution $(u,\eta)$ to the shallow-water system (\ref{shallow-water})
from the solution $(z_+,z_-)$ to the system (\ref{sym-system}). Indeed, inverting (\ref{RI}) yields
\begin{equation}
\label{RH-inverse}
u(t,x) = \frac{1}{2} \left[ z_+(t,x) + z_-(t,x) \right], \quad
\eta(t,x) = \frac{1}{16} \left[ z_+(t,x) - z_-(t,x) \right]^2 - h(x).
\end{equation}
The initial data for $u$ and $\eta$ are given by
\begin{equation}
\label{RH-inverse-initial}
u_0(x) = \frac{1}{2} \left[ \varphi_+(x) + \varphi_-(x) \right], \quad
\eta_0(x) = \frac{1}{16} \left[ \varphi_+(x) - \varphi_-(x) \right]^2 - h(x),
\end{equation}
where positivity of $h(x) + \eta_0(x) > 0$ is assumed for every $x$.

For most quasilinear systems, local solutions in Sobolev spaces $H^s(\mathbb{R})$
are not continued for all times $t$ because wave breaking occurs in a finite time, resulting
in appearance of the shock waves \cite{Dafermos}. However, depending on the initial values $\varphi_{\pm}$
and the given profile $h$, the wave breaking may be avoided and the local solutions
can be continued for all finite times. We term such solutions as global solutions and warn that
these solutions are allowed to diverge in some norm as $t \to \infty$.

This paper is devoted to the solvability of the classical system (\ref{sym-system})
both locally and globally. We will consider the semi-infinite line $[0,\infty)$ for $x$. Generally speaking,
a boundary condition is required at the finite boundary $x = 0$ for all positive times $t > 0$.
However, if we find a condition on the initial values $\varphi_{\pm}$ and the given profile $h$
which ensure that both characteristic speeds $c_{\pm}$ in (\ref{speeds}) are negative near $x = 0$
for all $t > 0$, then we can avoid setting boundary conditions at $x = 0$. This is the key ingredient of the method
of an additional argument, which we develop in this work. Moreover, with additional
constraints on $\varphi_{\pm}$ and $h$, one can also continue classical solutions to
the shallow-water system (\ref{shallow-water}) globally in time and thus avoid wave breaking.

The novel method of an additional argument was pioneered for scalar
conservation laws in \cite{Iman1,Iman2} and for systems of conservation laws in \cite{Alex1,Alex2}.
This method allows us to avoid technical problems arising in other techniques
such as the method of characteristics or the method of generalized solutions \cite{RY}.
For instance, the solvability condition in the method of characteristics relies on
invertibility of the characteristic variables,
which may be difficult to prove. Compared to the method of characteristics, the method
of an additional argument allows us to obtain the local and global solvability of the quasilinear system
directly in physical coordinates.

In what follows, for a given $T > 0$, we use notation
$$
\Omega_T := \{ (t,x) : \;\; t \in (0,T), \;\; x \in \mathbb{R}^+ \},
$$
for the domain of definition of the Cauchy problem associated with the system (\ref{sym-system}).
We denote by $C^{1,1}(\Omega_T)$ the space of bounded functions of two variables in $\Omega_T$,
which are continuously differentiable both in $t$ and $x$ with bounded first derivatives.
We also introduce the norm in the space of functions $C_b^n(\mathbb{R}^+)$
with bounded and continuous derivatives up to the $n$-th order:
$$
\| h \|_{C_b^n} := \sup_{x \in \mathbb{R}^+} |h(x)| + \sum_{j=1}^n \sup_{x \in \mathbb{R}^+} |h^{(j)}(x)|,
\quad h \in C_b^n(\mathbb{R}^+).
$$
The following two theorems present the main results obtained in this paper.

\begin{theorem}
Assume that $u_0, \eta_0 \in C^1_b(\mathbb{R}^+)$ and $h \in C^2_b(\mathbb{R}^+)$ satisfy the conditions
\begin{equation}
\label{cond1}
h(x) \geq 0, \quad h'(x) \leqslant 0, \quad x \in \mathbb{R}^+,
\end{equation}
and
\begin{equation}
\label{cond2}
\eta_0(x) \geq C, \quad u_0(x) \leqslant  - 2\sqrt {h(x) + {\eta _0}(x)},  \quad x \in \mathbb{R}^+,
\end{equation}
for a fixed positive constant $C$. 
Then, for every $T > 0$ satisfying the constraint
\begin{equation}
\label{time-constraint-local}
T \leqslant \min\left(\frac{C_\varphi}{C_h},\frac{1}{15 C_\varphi}\right),
\end{equation}
where $C_h := \| h \|_{C^2_b}$ and $C_{\varphi} := \max\{\| \varphi_+ \|_{C^1_b},\| \varphi_- \|_{C^1_b}\}$
with the initial data $\varphi_{\pm} := u_0 \pm \sqrt{h + \eta}$, there exists a unique classical solution
$u,\eta \in C^{1,1}(\Omega _T)$ to the shallow-water system (\ref{shallow-water})
such that $u|_{t = 0} = u_0$ and $\eta |_{t =0} = \eta_0$. 
\label{theorem-main-local}
\end{theorem}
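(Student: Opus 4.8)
The plan is to work with the diagonalized system (\ref{sym-system}) in the Riemann invariants, to construct a classical solution $(z_+,z_-)$ on $\Omega_T$ by the method of the additional argument, and to recover $(u,\eta)$ through the algebraic relations (\ref{RH-inverse}). The starting observation is that hypotheses (\ref{cond1})--(\ref{cond2}) translate, via (\ref{RI}) and (\ref{RH-inverse-initial}), into sign conditions on the data: $\varphi_+ = u_0 + 2\sqrt{h+\eta_0} \leqslant 0$, $\varphi_- = u_0 - 2\sqrt{h+\eta_0} \leqslant -4\sqrt C < 0$, and $\varphi_+ - \varphi_- = 4\sqrt{h+\eta_0} \geqslant 4\sqrt C$. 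These are precisely the features to be propagated: whenever $z_\pm \leqslant 0$ on $\Omega_T$, the characteristic speeds (\ref{speeds}) satisfy $c_+ \leqslant \frac{1}{4} z_- < 0$ and $c_- \leqslant \frac{3}{4} z_- < 0$ (using $z_- < 0$), so every characteristic travels strictly to the left. Consequently the backward characteristic through any $(t,x) \in \overline{\Omega_T}$ obeys $\eta_\pm(s;t,x) \geqslant x > 0$ for $0 \leqslant s \leqslant t$, never leaves $\mathbb{R}^+$, and reaches the line $\{t=0\}$; no boundary condition at $x=0$ is required, which is the decisive point making the method applicable on the half-line.

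Next I would set up the integral formulation. For $k \in \{+,-\}$ let $s \mapsto \eta_k(s;t,x)$ solve $\partial_s\eta_k = c_k(s,\eta_k)$ with $\eta_k(t;t,x) = x$. Along the $k$-th characteristic, (\ref{sym-system}) reduces to $\partial_s\big(z_k(s,\eta_k(s;t,x))\big) = h'(\eta_k(s;t,x))$, so the group property of the flow gives
$$
z_k(t,x) = \varphi_k\big(\eta_k(0;t,x)\big) + \int_0^t h'\big(\eta_k(\tau;t,x)\big)\,d\tau .
$$
Evaluating $c_k$ along $\eta_k$ requires the value of $z_{-k}$ at the moving point $(s,\eta_k(s;t,x))$, which one expresses by the same formula applied to the $(-k)$-characteristic issued backward from that point; this auxiliary two-parameter family of characteristics, together with the values of the Riemann invariants along it, constitutes the "additional argument." Taking $z_\pm$, the characteristic maps $\eta_\pm(\cdot\,;t,x)$, and these composite objects as unknowns yields a closed system of nonlinear integral equations on $\Omega_T$ which, for $C^1$ data, is equivalent to the Cauchy problem (\ref{sym-system})--(\ref{Cauchy-value}).

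I would then solve this system by a contraction argument on the complete metric space of tuples of bounded $C^1$ functions (on $\Omega_T$ and on the associated strips), with norms controlled by $C_\varphi$ and $C_h$ and with the a priori constraints built in: $z_\pm \leqslant 0$ (automatically preserved, since $\varphi_\pm \leqslant 0$ and, by $h' \leqslant 0$ along characteristics that stay in $\mathbb{R}^+$, the integral term is $\leqslant 0$), $|z_\pm| \leqslant C_\varphi + TC_h \leqslant 2C_\varphi$ (using $T \leqslant C_\varphi/C_h$), and uniform bounds on $\partial_x z_\pm, \partial_t z_\pm$. On this set $|c_\pm| \leqslant 2C_\varphi$ with Lipschitz constants $O(C_\varphi)$; differentiating the characteristic flow in $(t,x)$ and invoking Gr\"onwall bounds $\partial\eta_k$, and one checks that the integral operator maps the set into itself and contracts it with a factor $O(TC_\varphi)$ that is strictly less than $1$ for $T$ in the range (\ref{time-constraint-local}). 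The Banach fixed point theorem then gives a unique fixed point; a standard bootstrap places it in $C^{1,1}(\Omega_T)$ and shows it solves (\ref{sym-system})--(\ref{Cauchy-value}) classically, and uniqueness in $C^{1,1}(\Omega_T)$ follows from the same estimate. It remains to check that hyperbolicity persists: subtracting the two equations of (\ref{sym-system}) gives $\partial_t(z_+-z_-) + c_+\,\partial_x(z_+-z_-) = -\frac{1}{2}(z_+-z_-)\,\partial_x z_-$, and integrating this linear transport equation along the $+$-characteristic shows that $z_+ - z_-$ keeps its sign and stays bounded below by $4\sqrt C\,e^{-\frac{1}{2}T\|\partial_x z_-\|_\infty} > 0$ on $\Omega_T$; hence $h + \eta = \frac{1}{16}(z_+-z_-)^2 > 0$, and (\ref{RH-inverse}) produces the desired $(u,\eta) \in C^{1,1}(\Omega_T)$ with $u|_{t=0} = u_0$ and $\eta|_{t=0} = \eta_0$.

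I expect the main obstacle to be the $C^1$ part of the estimates. One must differentiate in $(t,x)$ not only the characteristic maps $\eta_k(s;t,x)$ but also the composite "additional-argument" maps, whose derivatives couple both characteristic families, and then close a coupled system of Gr\"onwall inequalities for all of these quantities while keeping every constant explicit enough to land the clean threshold $T \leqslant \min(C_\varphi/C_h,\,1/(15C_\varphi))$ in (\ref{time-constraint-local}). By contrast, the sign propagation $z_\pm \leqslant 0$, the uniform negativity of $c_\pm$, and the lower bound on $z_+ - z_-$ are comparatively soft consequences of $h \geqslant 0$, $h' \leqslant 0$, and of taking $T$ small.
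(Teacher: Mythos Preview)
Your overall strategy matches the paper's: translate (\ref{cond1})--(\ref{cond2}) into the sign conditions $\varphi_\pm\leqslant 0$, use these together with $h'\leqslant 0$ to keep the backward characteristics inside $\mathbb{R}^+$ (this is Proposition~\ref{proposition-definition}), recast (\ref{sym-system}) as the integral system (\ref{2.11})--(\ref{2.14}) in the additional-argument variables $(Z_\pm,Y_\pm)$, solve that system by a fixed-point argument, and finally invoke the correspondence result (Proposition~\ref{prop-correspondence}) to return to $(z_+,z_-)$ and then to $(u,\eta)$ via (\ref{RH-inverse}). Your propagation of the sign conditions and of the gap $z_+-z_-\geqslant 4\sqrt{C}\,e^{-\frac12 T\|\partial_x z_-\|_\infty}$ is correct and matches the paper's philosophy (though the latter lower bound is not actually needed for Theorem~\ref{theorem-main-local}).

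The substantive difference is in how the fixed point is obtained, and this is exactly where your anticipated ``main obstacle'' bites. You propose a single contraction on a ball in $C^1$. The problem is equation (\ref{2.14}): $Y_\pm$ is a \emph{composition} $Z_\mp(s;s,\eta_\pm(s;t,x))$ of two unknowns. Differentiating it in $x$ produces $U_\mp(s;s,\eta_\pm)\cdot\xi_\pm$, so estimating $\|\partial_x Y_\pm-\partial_x Y_\pm'\|$ for two iterates requires comparing $U_\mp$ at \emph{different} spatial points, hence a modulus of continuity (or a Lipschitz bound) on $U_\mp$ itself---one derivative more than you are carrying. The paper sidesteps this by splitting the argument: (i) a \emph{two-level} Picard scheme in $C^{0,0,0}(\Gamma_T)$---an outer iteration in which $Z_\mp$ in (\ref{2.14}) is frozen at the previous step, and an inner iteration that solves the resulting implicit equations (Lemma~\ref{lemma-local}); (ii) a separate analysis of the $x$-derivatives (\ref{der-eq-1})--(\ref{der-eq-2}), where (\ref{der-eq-2}) is linear in $V_\pm$ but quadratic in $U_\pm$, solved by the \emph{Schauder} fixed-point theorem in a ball and then shown to be a contraction there for uniqueness (Lemma~\ref{lemma-implicit}); (iii) a direct bootstrap to $C^{1,1,1}(\Gamma_T)$ using the explicit formulas for $\partial_t$- and $\partial_s$-derivatives (Lemma~\ref{lemma-local-improved}). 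The explicit constant $1/(15C_\varphi)$ in (\ref{time-constraint-local}) comes out of the bookkeeping in steps (i)--(ii). Your one-shot $C^1$ contraction, as written, does not close without an extra regularity hypothesis; if you want to keep a single fixed-point argument, you would have to work in a space with a uniform modulus of continuity for the first derivatives (e.g., $C^{1,\alpha}$), which changes the constants and is not what the paper does.
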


\begin{theorem}
In addition to the conditions of Theorem \ref{theorem-main-local}, assume that
$u_0, \eta_0 \in C^1_b(\mathbb{R}^+)$ and $h \in C^2_b(\mathbb{R}^+)$ satisfy the conditions
\begin{equation}
\label{cond3-on-h}
h''(x) \geq 0, \quad x \in \mathbb{R}^+
\end{equation}
and
\begin{equation}
\label{cond3}
u_0'(x) \geq \frac{|h'(x) + \eta_0'(x)|}{\sqrt{h(x) + {\eta _0}(x)}},  \quad x \in \mathbb{R}^+.
\end{equation}
Then, for every $T > 0$, there exists a unique classical solution $u,\eta \in C^{1,1}(\Omega _T)$
to the shallow-water system (\ref{shallow-water}) such that $u|_{t = 0} = u_0$ and $\eta |_{t =0} = \eta_0$.
\label{theorem-main-global}
\end{theorem}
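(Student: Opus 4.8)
The plan is to bootstrap from the local existence result (Theorem \ref{theorem-main-local}) via a continuation argument: we know a unique classical solution $u,\eta\in C^{1,1}(\Omega_{T_0})$ exists on a short time interval $[0,T_0]$, and the standard blow-up criterion for quasilinear hyperbolic systems says the solution persists as long as the $C^0$ norms of $z_\pm$ and the $C^0$ norms of $\partial_x z_\pm$ stay bounded on each finite time window (equivalently, as long as hyperbolicity $h+\eta>0$ is not lost and no gradient blow-up occurs). So the entire task reduces to deriving a priori bounds, on an arbitrary interval $[0,T]$, for (i) $z_\pm$ themselves, (ii) the hyperbolicity margin $h+\eta\geq$ some positive quantity, and (iii) the spatial derivatives $\partial_x z_\pm$. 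The signs in the hypotheses are engineered precisely so that the natural ODEs along characteristics for these quantities have the right monotonicity.

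First I would set up the characteristic ODEs. Along the $\pm$-characteristic $\dot x = c_\pm$, the equation \eqref{sym-system} gives $\frac{d}{dt}z_\pm = h'(x)\leq 0$, so each Riemann invariant is nonincreasing along its own characteristic; combined with $h'\le 0$ this also shows the characteristic speeds $c_\pm = u\pm\sqrt{h+\eta}$ stay negative (this is what lets us dispense with a boundary condition at $x=0$ — characteristics only exit through $x=0$, never enter). Condition \eqref{cond2} says $\varphi_+(x)=u_0+\sqrt{h+\eta_0}\le -\sqrt{h+\eta_0}<0$ and $\varphi_-(x)=u_0-\sqrt{h+\eta_0}\le -3\sqrt{h+\eta_0}<0$, so at $t=0$ both invariants are negative, hence by the monotonicity they remain negative and, crucially, bounded below only by an amount controlled by $t\|h'\|_\infty$ — giving the needed two-sided $C^0$ bound on $z_\pm$ on $[0,T]$, and with it a lower bound $h+\eta = \tfrac1{16}(z_+-z_-)^2 \geq$ something; but to keep this lower bound positive one needs to control $z_+-z_-$ from below, which is where the derivative condition enters. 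The key monotone quantity is the ordering $z_+ - z_- = 2\sqrt{h+\eta}\ge 0$: I would show this gap cannot collapse, because $\frac{d}{dt}(z_+-z_-)$ along... — more cleanly, differentiate \eqref{sym-system} in $x$ to get Riccati-type equations for $p_\pm:=\partial_x z_\pm$, schematically $\frac{d}{dt}p_\pm + \tfrac14(3p_\pm + \text{cross terms})p_\pm = h''(x) + (\text{lower order})$, and here $h''\ge 0$ (condition \eqref{cond3-on-h}) provides a favorable forcing while condition \eqref{cond3}, which is exactly $u_0'\ge |h'+\eta_0'|/\sqrt{h+\eta_0}$, translates into $\partial_x z_+|_{t=0}\ge 0$ and $\partial_x z_-|_{t=0}\le 0$ (since $\partial_x z_\pm = u_0' \pm (h'+\eta_0')/\sqrt{h+\eta_0}$). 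These signs are invariant under the Riccati flow when $h''\ge0$: $p_+$ stays $\ge 0$ and $p_-$ stays $\le 0$, which both prevents gradient blow-up (the dangerous sign for a Riccati blow-up is the opposite one) and keeps $z_+-z_-$ spatially monotone, protecting the hyperbolicity constraint.

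The main obstacle I anticipate is handling the cross terms in the $p_\pm$ equations rigorously: the equation for $p_+$ involves $p_-$ (through $\partial_x c_+ = \tfrac14(3p_+ + p_-)$) and vice versa, so the two Riccati equations are coupled, and one must run a simultaneous barrier/continuity argument showing the region $\{p_+\ge 0,\ p_-\le 0\}$ is forward-invariant rather than treating each scalar Riccati equation in isolation. I would do this by a continuity-in-time argument: let $t^*$ be the first time either sign is violated, evaluate the ODE at $t^*$ on the offending characteristic, and use $h''\ge0$ together with the still-valid signs of the other quantities to contradict the crossing. Once the invariant region and the $C^0$ bounds are in place, the upper bounds on $|p_\pm|$ follow from integrating the now-sign-definite Riccati inequalities (which become linear-growth bounds, not blow-up, once the quadratic term has the stabilizing sign), the hyperbolicity $h+\eta\ge$ positive constant on $[0,T]$ follows, and the continuation criterion then upgrades the local solution to one on all of $\Omega_T$; uniqueness is inherited from the local theory. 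A secondary technical point is that, working on the half-line with no boundary data, one must verify the characteristics through any point $(t,x)\in\Omega_T$ either originate at $t=0$ or at $x=0$ with $t>0$ — but in the latter case the solution at $(0^+,\cdot)$ near $x=0$ is still determined by the $t=0$ data carried along characteristics emanating rightward, precisely because $c_\pm<0$ means no characteristic enters from $x=0$; this is the "additional argument" of the title and I would invoke it exactly as in the proof of Theorem \ref{theorem-main-local}.
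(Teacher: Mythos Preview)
Your proposal contains a sign error that breaks the argument. You correctly compute $\partial_x z_\pm|_{t=0}=u_0'\pm (h'+\eta_0')/\sqrt{h+\eta_0}$, but condition \eqref{cond3}, namely $u_0'\ge |h'+\eta_0'|/\sqrt{h+\eta_0}$, forces \emph{both} $\varphi_+'\ge 0$ \emph{and} $\varphi_-'\ge 0$ (this is exactly \eqref{cond-phi-intro-2} in the paper). Your claim that $\partial_x z_-|_{t=0}\le 0$ is therefore wrong, and the initial data do not lie in your proposed region $\{p_+\ge 0,\ p_-\le 0\}$.

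Worse, that region would not do the job even if the data did land there. The derivative equations along characteristics are
\[
\frac{d}{ds}u_\pm + \tfrac{3}{4}u_\pm^2 + \tfrac{1}{4}u_+ u_- = h''(x),
\]
so at a boundary point $u_-=0$ one has $\frac{d}{ds}u_-=h''\ge 0$, which pushes $u_-$ \emph{positive}; the set $\{u_-\le 0\}$ is not forward-invariant under \eqref{cond3-on-h}. And for $u_-<0$ the term $-\tfrac{3}{4}u_-^2$ drives $u_-$ toward $-\infty$: this is precisely the blow-up sign, not the safe one. The correct invariant region is $\{p_+\ge 0,\ p_-\ge 0\}$: at $u_\pm=0$ one gets $\frac{d}{ds}u_\pm=h''\ge 0$, and for $u_\pm>0$ the quadratic terms are damping, giving an upper bound rather than blow-up.

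Once the sign is fixed, your outline is close to what the paper actually does. Proposition~\ref{proposition-derivative} establishes exactly $U_\pm\ge 0$, $V_\pm\ge 0$ (equivalently $p_\pm\ge 0$, since $\xi_\pm>0$), and Lemma~\ref{lemma-bound-on-derivative} extracts from this the sharp bound $\|\partial_x Z_\pm\|\le 2C_\varphi$. The paper then continues the solution not via an abstract blow-up criterion but by re-running the local construction (Lemma~\ref{lemma-continuation}): at step $m$ the $C^1_b$ norm of the data is at most $(m{+}1)C_\varphi$, the allowed time increment is $\sim 1/(15(m{+}1)C_\varphi)$, and divergence of the harmonic series $\sum 1/k$ carries the solution past any finite $T$. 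Your more classical continuation-criterion route would also work with the corrected signs, but as written the invariant-region step fails.
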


\begin{remark}
It follows from the definition  (\ref{RI}) for Riemann invariants that
conditions (\ref{cond2}) are satisfied if
\begin{equation}
\label{cond-phi-intro-1}
\varphi_+(x) \leq 0, \quad \varphi_-(x) \leq 0, \quad x \in \mathbb{R}^+.
\end{equation}
Similarly, condition (\ref{cond3}) is satisfied if
\begin{equation}
\label{cond-phi-intro-2}
\varphi_+'(x) \geq 0, \quad \varphi_-'(x) \geq 0, \quad x \in \mathbb{R}^+.
\end{equation}
\end{remark}

\begin{remark}
Since the quasilinear system (\ref{sym-system}) is written in the symmetric form,
the result of Theorem \ref{theorem-main-local} agrees with the result of Kato \cite{Kato} on the infinite line, since Sobolev space
$H^s(\mathbb{R})$ with $s > \frac{3}{2}$ is continuously embedded into the space $C^1_b(\mathbb{R})$.
However, the Cauchy problem (\ref{sym-system}) and (\ref{Cauchy-value}) on the half-line cannot be solved
by the method of Kato \cite{Kato} unless a boundary condition is set at $x = 0$ in one way or another.
\end{remark}

\begin{remark}
The result of Theorem \ref{theorem-main-local} is stronger than the corresponding result of Courant and Lax \cite{CourantLax},
which establish the existence of classical solutions with Lipschitz continuity for their spatial derivatives
in a local neighborhood of any point $x$ on $\mathbb{R}^+$, provided the initial data are available near
this point. Although the formulations of the method of characteristics in \cite{CourantLax} and
the method of an additional argument here are similar, our technique allows us to obtain the solution
to the quasilinear system (\ref{sym-system}) in physical rather than characteristic coordinates.
Also we obtain a stronger result by using the Schauder fixed point theorem (see Lemma \ref{lemma-implicit} below)
instead of the Arzel\'a--Ascoli theorem on convergence of bounded and equicontinuous sequences for spatial derivatives.
\end{remark}

The alternative of the global existence in Theorem \ref{theorem-main-global} is the wave breaking in a finite time, which
happens when the shock waves are formed in the quasilinear hyperbolic systems \cite{Dafermos}.
We note that the wave breaking can also occur in the presence of
weak dispersion, if the initial data are sufficiently large in some norm \cite{Pelin1,Pelin2}.

The physical relevance of the conditions (\ref{cond1}), (\ref{cond2}), (\ref{cond3-on-h}), and (\ref{cond3}) is to
provide the bottom topography $h$ and the initial values for $u$ and $\eta$
such that both the Riemann invariants $z_{\pm}$ and their
characteristic speeds $c_{\pm}$ given by (\ref{RI}) and (\ref{speeds})
are strictly negative, whereas the Riemann invariants are monotonically increasing,
see (\ref{cond-phi-intro-1}) and (\ref{cond-phi-intro-2}). Under these conditions,
the surface waves do not break in a finite time, because they move convectively
to the finite boundary at $x = 0$, through which they radiate away.
These conditions can be satisfied, for instance, if
\begin{equation}
\label{example-main}
h(x) = (1+x)^{-p}, \quad \eta_0(x) = C, \quad u_0(x) = -2 \sqrt{C + h(x)},
\end{equation}
where $p > 0$ and $C > 0$ are fixed.
Thus, the bottom topography becomes deeper near $x = 0$ and uniform as $x \to \infty$,
whereas the initial horizontal velocity is negative everywhere and the current is stronger
near $x = 0$ and becomes uniform as $x \to \infty$. Such configurations can model
river waterfalls, e.g., Niagara falls in Ontario, Canada.

Theorems \ref{theorem-main-local} and \ref{theorem-main-global} can be extended to
the infinite line without any restrictions, as long as the conditions (\ref{cond1}),
(\ref{cond2}), (\ref{cond3-on-h}),  and (\ref{cond3}) hold on the infinite line. The main example (\ref{example-main})
does not make sense on the infinite line, but the conditions can be satisfied for
the shear flow on the flat bottom with sign-definite, monotonically increasing velocity $u_0$, which may vanish at
one infinity but has a non-vanishing background flow at the other infinity.

In a single wave reduction of the system (\ref{sym-system}) with $h'(x) \equiv 0$ and
$z_+(t,x) \equiv 0$, the constraint (\ref{cond3}) guarantees that
$$
\varphi_-'(x) = u_0'(x) - \frac{\eta_0'(x)}{\sqrt{h + \eta_0(x)}} \geq 0, \quad x \in \mathbb{R}^+.
$$
This condition is well known \cite{Dafermos} to exclude shock waves in the Cauchy problem
posed for the inviscid Burgers equation
\begin{equation}
\label{Burgers}
\left\{ \begin{array}{l}
\partial_t z_- + \frac{3}{4} z_- \partial_x z_- = 0, \\
z_- |_{t = 0} = \varphi_-. \end{array} \right.
\end{equation}
In the same context, the constraint (\ref{cond2}) ensures that $\varphi_-(x) \leq 0$ for every $x \in \mathbb{R}^+$,
the latter constraint is only required to avoid the boundary condition on $z_-$
at $x = 0$ for the evolution problem (\ref{Burgers}) on the semi-infinite line $\mathbb{R}^+$.

The rest of this paper is organized as follows. Section 2 is devoted
to the reformulation of the quasilinear system (\ref{sym-system}) as a system of
integral equations by using the method of an additional argument. The equivalence between
the quasilinear system (\ref{sym-system}) and the system of integral
equations is established. In Section 3, we obtain a local
solution of Theorem \ref{theorem-main-local}.
In Section 4, we show that
the local solution in $C^{1,1}(\Omega_T)$ can be extended for every $T > 0$
as in Theorem \ref{theorem-main-global}.
The additional constraints (\ref{cond3-on-h}) and (\ref{cond3}) allow us
to control the rate of change of the spatial derivatives of the solution $z_{\pm}$ during
the time evolution of the quasilinear system (\ref{sym-system}).

\section{Reformulation with the method of an additional argument}

Here we adopt the method of an additional argument in order to reformulate the
Cauchy problem given by  (\ref{sym-system}) and (\ref{Cauchy-value})
as a boundary-value problem along characteristic coordinates.
For a given point $(t,x) \in \Omega_T$, we introduce the extended characteristic coordinates $\eta_+(s;t,x)$
and $\eta_-(s;t,x)$  from solutions to the system of differential equations
\begin{eqnarray}
\label{coordinate-1-2}
\left\{ \begin{array}{l}
\frac{d \eta_+}{ds}(s;t,x) = \frac{1}{4} \left[ 3 z_+(s,\eta_+(s;t,x)) + z_-(s,\eta_+(s;t,x)) \right], \\
\frac{d \eta_-}{ds}(s;t,x) = \frac{1}{4} \left[ z_+(s,\eta_-(s;t,x)) + 3 z_-(s,\eta_-(s;t,x)) \right],
\end{array} \right. \quad 0 \leq s \leq t,
\end{eqnarray}
starting with the boundary values $\eta_{\pm}(t;t,x) = x$. In the characteristic variables,
the system (\ref{sym-system}) can be rewritten as the system of differential equations
\begin{eqnarray}
\label{evolution-1-2}
\left\{ \begin{array}{l}
\frac{d z_+}{ds}(s,\eta_+(s;t,x)) = h'(\eta_+(s;t,x)), \\
\frac{d z_-}{ds}(s,\eta_-(s;t,x)) = h'(\eta_-(s;t,x)), \end{array} \right. \quad 0 \leq s \leq t,
\end{eqnarray}
starting with the initial values $z_{\pm}(0,\eta_{\pm}(0;t,x)) = \varphi_{\pm}(\eta_{\pm}(0;t,x))$.
The domain of definition of the systems (\ref{coordinate-1-2}) and (\ref{evolution-1-2}) is given by
\begin{equation}
\Gamma_T := \left\{ (s,t,x) : \quad 0 \le s \le t \le T, \quad x \in \mathbb{R}^+ \right\},
\end{equation}
for a given $T > 0$. We denote by $C^{k,k,m}(\Gamma_T)$ the space of bounded functions of three variables in $\Gamma_T$,
which are differentiable $k$-times with respect to $s$ and $t$, $m$-times with respect to $x$,
with bounded derivatives. We also denote the supremum norm of a function $U \in C^{0,0,0}(\Gamma_T)$ by
\begin{equation}
\label{norm}
\| U \| := \sup_{(s,t,x) \in \Gamma_T} |U(s;t,x)|.
\end{equation}

The variable $s$ is referred to as the additional argument of the system (\ref{coordinate-1-2}) and (\ref{evolution-1-2}).
The main difference of the method of an additional argument from the method of characteristics is that the system
(\ref{coordinate-1-2}) is integrated backward in $s$ from the current time
$t$ to the initial time $0$, whereas the system (\ref{evolution-1-2}) is integrated forward in $s$
from the initial time $0$ to the current time $t$. Although the combined system (\ref{coordinate-1-2})
and (\ref{evolution-1-2}) represents a boundary-value problem instead of the Cauchy problem,
we are still able to rewrite the systems (\ref{coordinate-1-2}) and (\ref{evolution-1-2})
as a system of integral equations and to solve it by the Picard method of successful iterations.
Compared to the method of characteristics, the solutions $z_{\pm}(t,x) \equiv z_{\pm}(t,\eta_{\pm}(t;t,x))$
appear in physical rather than characteristic coordinates.

\subsection{Integral equations for classical solutions of system (\ref{sym-system})}

Integrating (\ref{coordinate-1-2}) backward in $s$, we obtain the system of integral equations
\begin{eqnarray}
\label{int-coordinate-1-2}
\left\{ \begin{array}{l}
\eta_+(s;t,x) = x - \frac{1}{4} \int_s^t \left[ 3 z_+(\nu,\eta_+(\nu;t,x)) + z_-(\nu,\eta_+(\nu;t,x)) \right] d\nu, \\
\eta_-(s;t,x) = x - \frac{1}{4} \int_s^t \left[ z_+(\nu,\eta_-(\nu;t,x)) + 3 z_-(\nu,\eta_-(\nu;t,x)) \right] d\nu,
 \end{array} \right. \quad 0 \leq s \leq t.
\end{eqnarray}
Integrating (\ref{evolution-1-2}) forward in $s$, we obtain another system of integral equations
\begin{eqnarray}
\label{int-evolution-1-2}
\left\{ \begin{array}{l}
z_+(s,\eta_+(s;t,x)) = \varphi_+(\eta_+(0;t,x)) + \int_0^s h'(\eta_+(\nu;t,x))) d\nu, \\
z_-(s,\eta_-(s;t,x)) = \varphi_-(\eta_-(0;t,x)) + \int_0^s h'(\eta_-(\nu;t,x))) d\nu,  \end{array} \right. \quad 0 \leq s \leq t,
\end{eqnarray}
From the geometric definition of the characteristic curves in the domain $\Omega_T$ on the $(t,x)$ plane, we have the connection
formulas
\begin{equation}
\label{connection-formula}
\left\{ \begin{array}{l}
z_-(s,\eta_+(s;t,x)) = z_-(s,\eta_-(s;s,\eta_+(s;t,x))), \\
z_+(s,\eta_-(s;t,x)) = z_+(s,\eta_+(s;s,\eta_-(s;t,x))), \end{array} \right. 
\quad 0 \leq s \leq t.
\end{equation}

Let us denote
\begin{equation}
\label{relations-Z-Y}
Z_{\pm}(s;t,x) := z_{\pm}(s,\eta_{\pm}(s;t,x)) \quad \mbox{\rm and} \quad
Y_{\pm}(s;t,x) := z_{\mp}(s,\eta_{\pm}(s;t,x)).
\end{equation}
It follows from the boundary conditions $\eta_{\pm}(t;t,x) = x$ that 
$Z_{\pm}(t;t,x) = z_{\pm}(t,x)$ and $Y_{\pm}(t;t,x) = z_{\mp}(t,x)$. 
Furthermore, equations (\ref{connection-formula}) are equivalent to
the following relations between variables $Z_{\pm}$ and $Y_{\pm}$:
\begin{eqnarray}
\label{correspondence}
Y_+(s;t,x) = Z_-(s;s,\eta_+(s;t,x)), \quad Y_-(s;t,x) = Z_+(s;s,\eta_-(s;t,x)).
\end{eqnarray}
From now on, we will be writing systems by using one equation with two subscripts.
By using new notations, we rewrite system (\ref{int-coordinate-1-2}) in the following form
\begin{eqnarray}
\label{int-coordinate-1-2-again}
\eta_{\pm}(s;t,x) = x - \frac{1}{4} \int_s^t \left[ 3 Z_{\pm}(\nu;t,x) + Y_{\pm}(\nu;t,x) \right] d\nu,  \quad 0 \leq s \leq t.
\end{eqnarray}
Therefore, the characteristic coordinates can be eliminated from the systems (\ref{int-evolution-1-2}) and
(\ref{correspondence}), after which we obtain the following integral equations
for unknown functions $Z_{\pm}$ and $Y_{\pm}$ in $\Gamma_T$:
\begin{eqnarray}
\nonumber
Z_{\pm}(s;t,x) & = & \varphi_{\pm}\left( x - \frac{1}{4} \int_0^t \left[ 3 Z_{\pm}(\nu;t,x) + Y_{\pm}(\nu;t,x) \right] d \nu \right) \\
\label{2.11} & \phantom{t} & +
\int_0^s h'\left(x - \frac{1}{4} \int_{\nu}^t \left[ 3 Z_{\pm}(\tau;t,x) + Y_{\pm}(\tau;t,x) \right] d\tau \right) d\nu,
\end{eqnarray}
and
\begin{eqnarray}
Y_{\pm}(s;t,x) = Z_{\mp}\left(s;s,x - \frac{1}{4} \int_s^t \left[ 3 Z_{\pm}(\nu;t,x) + Y_{\pm}(\nu;t,x) \right] d \nu \right).
\label{2.14}
\end{eqnarray}

Our first result states that the system of integral equations (\ref{2.11})--(\ref{2.14}) is closed in $\Gamma_T$ for every $T > 0$
under conditions (\ref{cond1}) and (\ref{cond-phi-intro-1}) on $h$ and $\varphi_{\pm}$.

\begin{proposition}
Assume that $h \in C^1_b(\mathbb{R}^+)$ and $\varphi_{\pm} \in C^0_b(\mathbb{R}^+)$.
Under the conditions
\begin{equation}
\label{cond-varphi}
h'(x) \leqslant 0, \quad \varphi_+(x) \leq 0, \quad \varphi_-(x) \leq 0, \quad x \in \mathbb{R}^+,
\end{equation}
the system of integral equations (\ref{2.11})--(\ref{2.14}) is closed in $\Gamma_T$ for every $T > 0$ in the sense
that if a unique solution $(Z_{\pm},Y_{\pm})$ exists in $C^{0,0,0}(\Gamma_T)$, then
\begin{equation}
\label{constraints-on-solutions}
\left\{ \begin{array}{l}
\eta_{\pm}(s;t,x) \geq 0, \\
Z_{\pm}(s;t,x) \leq 0, \\
Y_{\pm}(s;t,x) \leq 0, \end{array} \right.
\quad (s,t,x) \in \Gamma_T.
\end{equation}
\label{proposition-definition}
\end{proposition}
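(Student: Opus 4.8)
The plan is to read the three sign constraints straight off the integral equations, using only the pointwise signs $h'\le 0$ and $\varphi_\pm\le 0$ from (\ref{cond-varphi}), in the order $Z_\pm\le 0\ \Rightarrow\ Y_\pm\le 0\ \Rightarrow\ \eta_\pm\ge 0$. Assume $(Z_\pm,Y_\pm)\in C^{0,0,0}(\Gamma_T)$ solves (\ref{2.11})--(\ref{2.14}); in particular all the nonlinear compositions there are well defined, so the arguments $\eta_\pm(\nu;t,x)=x-\frac14\int_\nu^t[3Z_\pm+Y_\pm]\,d\tau$ of $\varphi_\pm$ and $h'$ and the third slot of $Z_\mp$ lie in $\mathbb{R}^+$. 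Granting this, the first step is immediate: in (\ref{2.11}) the term $\varphi_\pm(\eta_\pm(0;t,x))$ is $\le 0$ because $\varphi_\pm\le 0$ on $\mathbb{R}^+$, and $\int_0^s h'(\eta_\pm(\nu;t,x))\,d\nu\le 0$ because $s\ge 0$ and $h'\le 0$ on $\mathbb{R}^+$; hence $Z_\pm(s;t,x)\le 0$ throughout $\Gamma_T$. Crucially, this step uses no information on $Y_\pm$.

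For the second step I would feed $Z_\mp\le 0$ into (\ref{2.14}): since $Y_\pm(s;t,x)=Z_\mp(s;s,\eta_\pm(s;t,x))$ with $(s;s,\eta_\pm(s;t,x))\in\Gamma_T$, the bound just obtained gives $Y_\pm(s;t,x)\le 0$ on $\Gamma_T$. For the third step, with both $Z_\pm\le 0$ and $Y_\pm\le 0$ in hand, the integrand $3Z_\pm+Y_\pm$ in (\ref{int-coordinate-1-2-again}) is $\le 0$, and since $s\le t$ we get $\eta_\pm(s;t,x)=x-\frac14\int_s^t[3Z_\pm+Y_\pm]\,d\nu\ge x\ge 0$, which is (\ref{constraints-on-solutions}). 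The dependency chain $Z_\pm\le 0\Rightarrow Y_\pm\le 0\Rightarrow\eta_\pm\ge 0$ is acyclic, so no circularity remains \emph{once the well-definedness of the compositions is granted}.

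That proviso is the one delicate point, and it is where I expect the real (if modest) work to lie: a priori, before $\eta_\pm\ge 0$ is known, the right-hand sides of (\ref{2.11}) and (\ref{2.14}) evaluate $\varphi_\pm$, $h'$ and $Z_\mp$ at a point that might leave $\mathbb{R}^+$. I would dispose of it by extending $h$ to $C^1_b(\mathbb{R})$ and $\varphi_\pm$ to $C^0_b(\mathbb{R})$ while preserving $h'\le 0$ and $\varphi_\pm\le 0$ (e.g.\ by constant extension to the left of $0$), so that the right-hand sides are defined for all real arguments; the three steps above then apply verbatim and produce $\eta_\pm(s;t,x)\ge x\ge 0$, after which the extended data coincide with the original ones along every argument that actually occurs, so the solution solves the original half-line problem and obeys (\ref{constraints-on-solutions}). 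An equivalent route is a continuation argument in $t$: the set of $\sigma\in(0,T]$ for which (\ref{constraints-on-solutions}) holds throughout $\Gamma_\sigma$ contains small $\sigma$ (at $s=t=0$ one has $\eta_\pm=x$, $Z_\pm=\varphi_\pm\le 0$, $Y_\pm=\varphi_\mp\le 0$, and one uses continuity of the solution), and it is closed by continuity and open by the acyclic sign chain above, hence equals $(0,T]$.

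In either presentation the substantive content is the sign chain, which encodes the physical statement that under (\ref{cond-varphi}) the characteristic speeds $c_\pm=\frac14(3Z_\pm+Y_\pm)$ are nonpositive, so characteristics traced backward in $s$ move to the right and never reach $x=0$; everything else is bookkeeping about domains of composition.
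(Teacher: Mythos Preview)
Your proof is correct and follows essentially the same route as the paper, which reads off the same three inequalities $\eta_\pm\ge x$, $Z_\pm\le\varphi_\pm(\eta_\pm(0;t,x))\le 0$, $Y_\pm=Z_\mp(\cdot)\le 0$ directly from (\ref{int-coordinate-1-2-again}), (\ref{2.11}), (\ref{2.14}) and closes the mutual dependence you identify with a one-phrase appeal to ``continuation arguments.'' Your treatment of the domain issue is more explicit than the paper's; note only that the extension route as you state it would also require extending $Z_\mp$ in the third slot of (\ref{2.14}), so the continuation route is the cleaner of your two options and is the one the paper has in mind.
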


\begin{proof}
We obtain from (\ref{int-coordinate-1-2-again}), (\ref{2.11}), and (\ref{2.14}) for every $(s,t,x) \in \Gamma_T$,
$$
\eta_{\pm}(s;t,x) \geq x, \quad Z_{\pm}(s;t,x) \leq \varphi_{\pm}(\eta_{\pm}(0;t,x)) \leq 0, \quad
Y_{\pm}(s;t,x) = Z_{\mp}(s;s,\eta_{\pm}(s;t,x)) \leq 0,
$$
by using conditions (\ref{cond-varphi}) and the continuation arguments. Then, constraints (\ref{constraints-on-solutions})
follow.
\end{proof}

Next, we show how the classical solutions to the Cauchy problem (\ref{sym-system}) and (\ref{Cauchy-value})
are obtained from suitable solutions to the integral system  (\ref{2.11})--(\ref{2.14}).

\begin{proposition}
\label{prop-correspondence}
Assume that $h \in C^2_b(\mathbb{R}^+)$ and $\varphi_{\pm} \in C^1_b(\mathbb{R}^+)$.
If there exists a unique solution $(Z_{\pm},Y_{\pm}) \in C^{1,1,1}(\Gamma_T)$
of the system of integral equations (\ref{2.11})--(\ref{2.14}), then
$z_{\pm}(t,x) = Z_{\pm}(t;t,x)$ is a classical solution to system (\ref{sym-system})
in $C^{1,1}(\Omega_T)$ such that $z_{\pm}(0,x) = \varphi_{\pm}(x)$ for $x \in \mathbb{R}^+$.
\end{proposition}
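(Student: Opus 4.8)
The plan is to show that if $(Z_{\pm},Y_{\pm}) \in C^{1,1,1}(\Gamma_T)$ solves \eqref{2.11}--\eqref{2.14}, then $z_{\pm}(t,x) := Z_{\pm}(t;t,x)$ together with the characteristic coordinates $\eta_{\pm}$ defined by \eqref{int-coordinate-1-2-again} reproduces exactly the structure \eqref{int-coordinate-1-2}, \eqref{int-evolution-1-2}, \eqref{connection-formula}, and from there differentiating recovers \eqref{sym-system}. First I would define $\eta_{\pm}(s;t,x)$ by the closed formula \eqref{int-coordinate-1-2-again}; since $Z_{\pm},Y_{\pm} \in C^{1,1,1}(\Gamma_T)$, the integral expression shows $\eta_{\pm} \in C^{1,1,1}(\Gamma_T)$, and in particular $\eta_{\pm}(t;t,x) = x$. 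Differentiating \eqref{int-coordinate-1-2-again} in $s$ gives $\frac{d}{ds}\eta_{\pm}(s;t,x) = \frac{1}{4}[3Z_{\pm}(s;t,x) + Y_{\pm}(s;t,x)]$, so \eqref{coordinate-1-2} holds \emph{provided} we can identify $Z_{\pm}(s;t,x) = z_{\pm}(s,\eta_{\pm}(s;t,x))$ and $Y_{\pm}(s;t,x) = z_{\mp}(s,\eta_{\pm}(s;t,x))$ with $z_{\pm}$ defined only via the diagonal $Z_{\pm}(t;t,\cdot)$.

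The key identity to establish is the \emph{self-consistency} relation
\begin{equation}
\label{self-consistency}
Z_{\pm}(s;t,x) = Z_{\pm}(s;s,\eta_{\pm}(s;t,x)) = z_{\pm}(s,\eta_{\pm}(s;t,x)), \quad 0 \le s \le t,
\end{equation}
i.e. that the value of $Z_{\pm}$ at $(s;t,x)$ depends on $(t,x)$ only through the foot $\eta_{\pm}(s;t,x)$ of the characteristic. To prove \eqref{self-consistency} I would fix $s$ and $\xi = \eta_{\pm}(s;t,x)$ and compare the right-hand side of \eqref{2.11} evaluated at $(s;t,x)$ with that evaluated at $(s;s,\xi)$. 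Using the semigroup/cocycle property of the characteristic flow, $\eta_{\pm}(\nu;t,x) = \eta_{\pm}(\nu;s,\eta_{\pm}(s;t,x)) = \eta_{\pm}(\nu;s,\xi)$ for $0 \le \nu \le s$ — which itself follows from uniqueness for the ODE \eqref{coordinate-1-2} with the matching endpoint at $\nu = s$ — the argument $x - \frac14\int_0^t[\cdots]d\nu$ of $\varphi_{\pm}$ and the arguments of $h'$ in \eqref{2.11} at $(s;t,x)$ collapse to exactly the corresponding expressions at $(s;s,\xi)$, because on $[0,s]$ the integrands are $3Z_{\pm}(\nu;\cdot) + Y_{\pm}(\nu;\cdot)$ evaluated along the same characteristic. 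The reduction of the $\varphi_{\pm}$-argument requires splitting $\int_0^t = \int_0^s + \int_s^t$ and recognizing $x - \frac14\int_s^t[\cdots] = \xi$ by the very definition \eqref{int-coordinate-1-2-again}. Granting \eqref{self-consistency}, formula \eqref{2.14} becomes $Y_{\pm}(s;t,x) = Z_{\mp}(s;s,\eta_{\pm}(s;t,x)) = z_{\mp}(s,\eta_{\pm}(s;t,x))$, which is precisely \eqref{correspondence}, and \eqref{2.11} with $s = t$ becomes \eqref{int-evolution-1-2}.

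Once the integral relations \eqref{int-coordinate-1-2}, \eqref{int-evolution-1-2}, \eqref{connection-formula} are in hand with $z_{\pm}$ genuinely a function of $(s,\eta_{\pm})$, I would finish as follows. From $Z_{\pm} \in C^{1,1,1}$ and $\eta_{\pm} \in C^{1,1,1}$ one gets $z_{\pm} \in C^{1}$ in both arguments; one must check $z_{\pm} \in C^{1,1}(\Omega_T)$, i.e. bounded first derivatives, which follows from the boundedness built into $C^{1,1,1}(\Gamma_T)$ together with the fact (to be verified, e.g.\ from the explicit formula for $\partial_x \eta_{\pm}$) that the Jacobian $\partial_x \eta_{\pm}(s;t,x)$ stays bounded away from $0$ on $\Gamma_T$ so that the change of variables $\xi \mapsto \eta_{\pm}(s;t,\xi)$ is a $C^1$ diffeomorphism with bounded inverse derivative; here the sign conditions via Proposition \ref{proposition-definition} help since they keep $\eta_{\pm}$ monotone in $x$. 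Then differentiate the relation $z_{\pm}(t,x) = Z_{\pm}(t;t,x)$: using \eqref{int-evolution-1-2} in the form $z_{\pm}(t,x) = \varphi_{\pm}(\eta_{\pm}(0;t,x)) + \int_0^t h'(\eta_{\pm}(\nu;t,x))d\nu$ and differentiating along the characteristic, $\frac{d}{dt}\big|_{\text{along }\eta_+} z_+ = \partial_t z_+ + c_+ \partial_x z_+ = h'$, which is exactly the first equation of \eqref{sym-system}; symmetrically for $z_-$. Finally $z_{\pm}(0,x) = Z_{\pm}(0;0,x) = \varphi_{\pm}(x)$ by \eqref{2.11} at $s = t = 0$. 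The main obstacle is the self-consistency identity \eqref{self-consistency} — i.e.\ verifying that the two-time object $Z_{\pm}(s;t,x)$ really is the pullback of a single-time field $z_{\pm}$ along the flow — together with the accompanying cocycle property of $\eta_{\pm}$; the differentiation at the end is routine by comparison.
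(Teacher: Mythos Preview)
Your plan differs substantially from the paper's proof, and the central step contains a circularity that you have not resolved. You propose to deduce the self-consistency identity $Z_\pm(s;t,x)=Z_\pm(s;s,\eta_\pm(s;t,x))$ from the cocycle property $\eta_\pm(\nu;t,x)=\eta_\pm(\nu;s,\eta_\pm(s;t,x))$, and to obtain the latter from ``uniqueness for the ODE \eqref{coordinate-1-2}.'' But the ODE that $\eta_\pm$ actually satisfies, obtained by differentiating \eqref{int-coordinate-1-2-again}, has right-hand side $\tfrac14[3Z_\pm(s;t,x)+Y_\pm(s;t,x)]$, which depends on the parameters $(t,x)$ and not merely on $(s,\eta_\pm)$. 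Standard ODE uniqueness therefore does \emph{not} give the cocycle property until you already know the self-consistency that turns the right-hand side into a function of $(s,\eta_\pm)$ alone. The two statements are equivalent and neither can be used to bootstrap the other without an additional quantitative argument. One can in fact close this loop by writing an integral inequality for the difference $\eta_\pm(\nu;t,x)-\eta_\pm(\nu;s,\xi)$ with vanishing boundary value at $\nu=s$ and applying Gronwall on a short interval, but you have not outlined this, and once written out it is essentially the same estimate the paper uses.

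The paper avoids the whole issue. It introduces the transport operators $W_\pm f:=\partial_t f+\tfrac14(3Z_\pm(t;t,x)+Z_\mp(t;t,x))\partial_x f$, applies $W_+$ directly to \eqref{2.11}--\eqref{2.14}, and obtains a closed linear inequality of the form $3\|W_+Z_+\|+\|W_+Y_+\|\le K(t)\,(3\|W_+Z_+\|+\|W_+Y_+\|)$ with $K(t)<1$ for small $t$. This forces $W_\pm Z_\pm\equiv 0$ on a short interval, and iteration extends it to $[0,T]$. Since $\partial_s Z_\pm|_{s=t}=h'(x)$ follows directly from \eqref{2.11}, the chain rule gives $\partial_t z_\pm+\tfrac14(3z_\pm+z_\mp)\partial_x z_\pm=\partial_s Z_\pm(t;t,x)+(W_\pm Z_\pm)(t;t,x)=h'(x)$. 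Note in particular that the paper never needs to invert the map $x\mapsto\eta_\pm(s;t,x)$ or to invoke the sign conditions from Proposition~\ref{proposition-definition}; your appeal to those conditions to control $\partial_x\eta_\pm$ adds hypotheses not present in the statement and is unnecessary.
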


\begin{proof}
Let us introduce two differential operators $W_{\pm}$ given by
$$
W_{\pm} f := \frac{\partial f}{\partial t} + \frac{1}{4} \left( 3 Z_{\pm}(t;t,x) + Z_{\mp}(t;t,x) \right) \frac{\partial f}{\partial x}.
$$
Applying $W_+$ to the corresponding integral equation in the system (\ref{2.11})
and using $Y_+(t;t,x) = Z_-(t;t,x)$ from (\ref{correspondence}), we obtain
\begin{eqnarray*}
(W_+ Z_+)(s;t,x) & = & -\frac{1}{4} \varphi_+'(\cdot)
\; \int_0^t \left[ 3 (W_+ Z_+)(s;t,x) + (W_+ Y_+)(s;t,x) \right] ds \\
& \phantom{t} & -\frac{1}{4}
\int_0^s h''(\cdot)
\left( \int_{\nu}^t \left[ 3 (W_+ Z_+)(\tau;t,x) + (W_+ Y_+)(\tau;t,x) \right] d\tau \right) d\nu,
\end{eqnarray*}
where the arguments of $\varphi_+'(\cdot)$ and $h''(\cdot)$ are the same as in (\ref{2.11}).
Since we have the correspondence between $Y_+$ and $Z_-$ from the system (\ref{2.14}), we obtain
similarly
\begin{eqnarray*}
(W_+ Y_+)(s;t,x) = -\frac{1}{4} \partial_x Z_-(\cdot) \;
\int_{s}^t \left[ 3 (W_+ Z_+)(\nu;t,x) + (W_+ Y_+)(\nu;t,x) \right] d\nu,
\end{eqnarray*}
where the argument of $Z_-(\cdot)$ is the same as in (\ref{2.14}).
By using the norm in $\Gamma_T$ defined by (\ref{norm}), we obtain the following estimate
$$
3 \| W_+ Z_+ \| + \| W_+ Y_+ \| \leq \frac{1}{4} \left( 3 \| \varphi_+ \|_{C^1_b} t +
\frac{3}{2} \| h \|_{C^2_b} t^2 + \| \partial_x Z_- \| t \right) \left( 3 \| W_+ Z_+ \| + \| W_+ Y_+ \| \right).
$$
Note that $\| \partial_x Z_- \| < \infty$ due to the assumption $Z_- \in C^{1,1,1}(\Gamma_T)$,
whereas $\| \varphi_+ \|_{C^1_b} < \infty$ and $\| h \|_{C^2_b} < \infty$ due to the assumptions on $\varphi_+$ and $h$.
Let $T_+$ be the smallest positive root of the algebraic equation
$$
\frac{1}{4} \left( 3 \| \varphi_+ \|_{C^1_b} t + \frac{3}{2} \| h \|_{C^2_b} t^2 + \| \partial_x Z_- \| t \right) = 1.
$$
Then, for every $t \in [0,t_+]$ with $t_+ := \min(T_+,T)$, we obtain
$$
\| W_+ Z_+ \| + \| W_+ Y_+ \| = 0,
$$
which imply $W_+ Z_+ = W_+ Y_+ = 0$ in $\Gamma_{t_+}$.

Applying $W_-$ to the corresponding integral equations into the system (\ref{2.11}) and (\ref{2.14}),
we obtain similar estimates
$$
3 \| W_- Z_- \| + \| W_- Y_- \| \leq \frac{1}{4}
\left( 3 \| \varphi_- \|_{C^1_b} t + \frac{3}{2} \| h \|_{C^2_b} t^2 + \| \partial_x Z_+ \| t \right)
\left( 3 \| W_- Z_- \| + \| W_- Y_- \| \right).
$$
Let $T_-$ be the smallest positive root of the algebraic equation
$$
\frac{1}{4} \left( 3 \| \varphi_- \|_{C^1_b} t + \frac{3}{2} \| h \|_{C^2_b} t^2 + \| \partial_x Z_+ \| t \right) = 1.
$$
Then, for every $t \in [0,t_-]$ with $t_- := \min(T_-,T)$, we obtain
$$
\| W_- Z_- \| + \| W_- Y_- \| = 0,
$$
which imply $W_- Z_- = W_- Y_- = 0$ in $\Gamma_{t_-}$.

Let $z_{\pm}(t,x) := Z_{\pm}(t;t,x)$ and $T_0 := \min(T_+,T_-,T)$. Then, for every $t \in [0,T_0]$,
we use $\partial_s Z_{\pm}(t;t,x) = h'(x)$ that follows from system (\ref{2.11}) and obtain
\begin{eqnarray*}
\frac{\partial z_+}{\partial t} + \frac{1}{4} (3 z_+ + z_-) \frac{\partial z_+}{\partial x} & = &
\frac{\partial Z_+}{\partial s}(t;t,x) + (W_+ Z_+)(t;t,x) = h'(x)
\end{eqnarray*}
and
\begin{eqnarray*}
\frac{\partial z_-}{\partial t} + \frac{1}{4} (z_+ + 3 z_-) \frac{\partial z_-}{\partial x} & = &
\frac{\partial Z_-}{\partial s}(t;t,x) + (W_- Z_-)(t;t,x) = h'(x),
\end{eqnarray*}
which is nothing but system (\ref{sym-system}). Therefore, $z_{\pm} \in C^{1,1}(\Omega_{T_0})$
is a solution to system (\ref{sym-system}) for $T_0 \leq T$. If $T_0 < T$, then the continuation
of the solution to the entire domain $\Omega_T$ can be performed in a finite number of steps.
\end{proof}

\subsection{Integral equations for $x$-derivatives of system (\ref{sym-system})}

Let us denote $u_{\pm}(t,x) := \partial_x z_{\pm}(t,x)$. If $z_{\pm} \in C^{1,1}(\Omega_T)$ as in
Proposition \ref{prop-correspondence}, then $u_{\pm} \in C^{0,0}(\Omega_T)$.
Differentiating (\ref{int-coordinate-1-2}) with respect to $x$, we obtain a system of integral equations
for $x$-derivatives of the characteristic coordinates:
\begin{eqnarray}
\label{coordinate-1-2-der}
\xi_{\pm}(s;t,x) = 1 - \frac{1}{4} \int_s^t \left[ 3 u_{\pm}(\nu,\eta_{\pm}(\nu;t,x)) + u_{\mp}(\nu,\eta_{\pm}(\nu;t,x)) \right] \xi_{\pm}(\nu;t,x) d \nu, 
\quad 0 \leq s \leq t, 
\end{eqnarray}
where $\xi_{\pm}(s;t,x) := \partial_x \eta_{\pm}(s;t,x)$ satisfies the initial conditions
$\xi_{\pm}(t;t,x) = 1$. There exists a unique solution of the system of integral equations
(\ref{coordinate-1-2-der}) in the form
\begin{equation}
\label{coordinate-1-2-der-sol}
\xi_{\pm}(s;t,x) = e^{-\frac{1}{4} \int_s^t \left[ 3 u_{\pm}(\nu,\eta_{\pm}(\nu;t,x)) + u_{\mp}(\nu,\eta_{\pm}(\nu;t,x)) \right] d\nu}, 
\quad 0 \leq s \leq t.
\end{equation}

The main difficulty in the method of characteristics is to control positivity of $\xi_{\pm}(s;t,x)$ in $\Gamma_T$ as $T$ increases.
The explicit expression (\ref{coordinate-1-2-der-sol}) shows that positivity of $\xi_{\pm}(s;t,x)$ in $\Gamma_T$
follows from boundness of $u_{\pm}(t,x)$ in $\Omega_T$, but this property is hard to control.
On the other hand, in the method of an additional argument, we introduce
\begin{equation}
\label{introduction-U-V}
U_{\pm}(s;t,x) := \partial_x Z_{\pm}(s;t,x), \quad
V_{\pm}(s;t,x) := \partial_x Y_{\pm}(s;t,x)
\end{equation}
and define by using (\ref{relations-Z-Y}) and the chain rule
\begin{equation}
\label{correspondence-U-V}
U_{\pm}(s;t,x) = u_{\pm}(s,\eta_{\pm}(s;t,x)) \xi_{\pm}(s;t,x), \quad
V_{\pm}(s;t,x) = u_{\mp}(s,\eta_{\pm}(s;t,x)) \xi_{\pm}(s;t,x).
\end{equation}
It follows from the boundary conditions $\eta_{\pm}(t;t,x) = x$ and $\xi_{\pm}(t;t,x) = 1$ 
that $U_{\pm}(t;t,x) = u_{\pm}(t,x)$ and $V_{\pm}(t;t,x) = u_{\mp}(t,x)$. 
If $(Z_{\pm},Y_{\pm}) \in C^{1,1,1}(\Gamma_T)$ as in Proposition \ref{prop-correspondence},
then $(U_{\pm},V_{\pm}) \in C^{0,0,0}(\Gamma_T)$.
By differentiating the system of integral equations (\ref{2.11}) and (\ref{2.14}) with respect to $x$,
we obtain the system of integral equations:
\begin{eqnarray}
\nonumber
U_{\pm}(s;t,x) & = & \varphi_{\pm}'\left(\cdot\right)
\left(  1 - \frac{1}{4} \int_0^t \left[ 3 U_{\pm}(\nu;t,x) + V_{\pm}(\nu;t,x) \right] d \nu \right)\\
\label{der-eq-1} & \phantom{t} & +
\int_0^s h''\left(\cdot\right) \left( 1 - \frac{1}{4} \int_{\nu}^t \left[ 3 U_{\pm}(\tau;t,x) + V_{\pm}(\tau;t,x) \right] d\tau  \right) d\nu,
\end{eqnarray}
and
\begin{eqnarray}
\nonumber
V_{\pm}(s;t,x) & = & U_{\mp}\left(s;s,x - \frac{1}{4} \int_s^t \left[ 3 Z_{\pm}(\nu;t,x) + Y_{\pm}(\nu;t,x) \right] d \nu \right) \\
& \phantom{t} & \phantom{t} \times
\left( 1 - \frac{1}{4} \int_s^t \left[ 3 U_{\pm}(\nu;t,x) + V_{\pm}(\nu;t,x) \right] d \nu  \right),
\label{der-eq-2}
\end{eqnarray}
where the arguments of $\varphi_{\pm}'(\cdot)$ and $h''(\cdot)$ are the same as in the integral equation (\ref{2.11}).
On the other hand, differentiating (\ref{int-coordinate-1-2-again}) in $x$ yields the following relation
\begin{equation}
\label{coordinate-1-2-der-sol-rel}
\xi_{\pm}(s;t,x) = 1 - \frac{1}{4} \int_s^t \left[ 3 U_{\pm}(\nu;t,x) + V_{\pm}(\nu;t,x) \right] d \nu, 
\quad 0 \leq s \leq t.
\end{equation}
This relation is complementary to the expression (\ref{coordinate-1-2-der-sol}).

The following proposition states that the variables $U_{\pm}$ and $V_{\pm}$ are sign-definite in $\Gamma_T$ for every $T > 0$,
for which a solution $(Z_{\pm},Y_{\pm}) \in C^{1,1,1}(\Gamma)$ exists,
under additional conditions (\ref{cond3-on-h}) and (\ref{cond-phi-intro-2}) on $h$ and $\varphi_{\pm}$.

\begin{proposition}
Assume that $h \in C^2_b(\mathbb{R}^+)$ and $\varphi_{\pm} \in C^1_b(\mathbb{R}^+)$ satisfy
(\ref{cond-varphi}), and the additional conditions
\begin{equation}
\label{cond3-final}
h''(x) \geq 0, \quad \varphi_+'(x) \geq 0, \quad \varphi'_-(x) \geq 0,  \quad x \in \mathbb{R}^+.
\end{equation}
If a solution $(Z_{\pm},Y_{\pm})$ to the system of integral equations (\ref{2.11})--(\ref{2.14}) exists in $C^{1,1,1}(\Gamma_T)$, then
\begin{equation}
\label{constraints-on-solutions-derivative}
\left\{ \begin{array}{l}
\xi_{\pm}(s;t,x) \leq 1, \\
U_{\pm}(s;t,x) \geq 0,\\
V_{\pm}(s;t,x) \geq 0, \end{array} \right. \quad (s,t,x) \in \Gamma_T.
\end{equation}
\label{proposition-derivative}
\end{proposition}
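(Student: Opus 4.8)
The plan is to exploit the observation that, once the Jacobian $\xi_{\pm}$ is known to be positive on $\Gamma_T$, the integral equations $(\ref{der-eq-1})$ and $(\ref{der-eq-2})$ acquire right-hand sides of a manifest sign. Indeed, substituting the identity $(\ref{coordinate-1-2-der-sol-rel})$ into $(\ref{der-eq-1})$, and substituting $(\ref{int-coordinate-1-2-again})$ together with $(\ref{coordinate-1-2-der-sol-rel})$ into $(\ref{der-eq-2})$, one rewrites these equations as
$$
U_{\pm}(s;t,x) = \varphi_{\pm}'(\eta_{\pm}(0;t,x))\, \xi_{\pm}(0;t,x) + \int_0^s h''(\eta_{\pm}(\nu;t,x))\, \xi_{\pm}(\nu;t,x)\, d\nu
$$
and
$$
V_{\pm}(s;t,x) = U_{\mp}(s;s,\eta_{\pm}(s;t,x))\, \xi_{\pm}(s;t,x).
$$
Accordingly I would proceed in four steps: (i) prove $\xi_{\pm} > 0$ on $\Gamma_T$; (ii) deduce $U_{\pm} \geq 0$ on $\Gamma_T$; (iii) deduce $V_{\pm} \geq 0$ on $\Gamma_T$; (iv) deduce $\xi_{\pm} \leq 1$ on $\Gamma_T$.

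For step (i), I would use that, by Proposition \ref{prop-correspondence}, the hypothesis $(Z_{\pm},Y_{\pm}) \in C^{1,1,1}(\Gamma_T)$ forces $z_{\pm} \in C^{1,1}(\Omega_T)$, so $u_{\pm} = \partial_x z_{\pm}$ is bounded on $\Omega_T$. For each fixed $(t,x)$ the function $\xi_{\pm}(\cdot;t,x)$ is the solution of the linear equation $(\ref{coordinate-1-2-der})$ with terminal value $\xi_{\pm}(t;t,x) = 1$, given in closed form by the exponential $(\ref{coordinate-1-2-der-sol})$; since its exponent is a bounded function on $\Gamma_T$, this yields $\xi_{\pm}(s;t,x) > 0$ on all of $\Gamma_T$, indeed bounded away from $0$. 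It is essential here that positivity is read off from $(\ref{coordinate-1-2-der-sol})$ rather than from $(\ref{coordinate-1-2-der-sol-rel})$, since the latter still involves the as-yet-unknown signs of $U_{\pm}$ and $V_{\pm}$.

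Steps (ii)--(iv) then reduce to a sign count. Since $(\ref{cond-varphi})$ is assumed, Proposition \ref{proposition-definition} applies and gives $\eta_{\pm}(s;t,x) \geq 0$ throughout $\Gamma_T$; hence every argument of $\varphi_{\pm}'$ and $h''$ in the identities above lies in $\mathbb{R}^+$, where $\varphi_{\pm}' \geq 0$ and $h'' \geq 0$ by $(\ref{cond3-final})$. Combined with $\xi_{\pm} > 0$, the first identity gives $U_{\pm} \geq 0$ on $\Gamma_T$, and this does not use the sign of $V_{\pm}$, so no bootstrap is needed. Next, since $\eta_{\pm}(s;t,x) \geq 0$, the point $(s,s,\eta_{\pm}(s;t,x))$ lies in $\Gamma_T$, so $U_{\mp}(s;s,\eta_{\pm}(s;t,x)) \geq 0$ by step (ii), and then the second identity gives $V_{\pm} \geq 0$ on $\Gamma_T$. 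Finally, inserting $U_{\pm} \geq 0$ and $V_{\pm} \geq 0$ into $(\ref{coordinate-1-2-der-sol-rel})$ gives $\xi_{\pm}(s;t,x) = 1 - \frac{1}{4}\int_s^t [3 U_{\pm}(\nu;t,x) + V_{\pm}(\nu;t,x)]\, d\nu \leq 1$, which is the remaining inequality in $(\ref{constraints-on-solutions-derivative})$.

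The only step that needs genuine care is (i), the positivity of the Jacobian $\xi_{\pm}$; everything after it is bookkeeping of signs, and in particular no continuation in $T$ is required because the bound $U_{\pm} \geq 0$ is obtained directly, without knowing the sign of $V_{\pm}$. I would also remark that, unlike the $C^{1,1}$-solvability statements earlier, this proposition is purely conditional on the existence of a $C^{1,1,1}(\Gamma_T)$-solution for the given $T$, so gluing solutions across subintervals of $[0,T]$ never enters the argument.
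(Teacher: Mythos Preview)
Your proposal is correct and follows essentially the same approach as the paper's proof: both obtain $\xi_{\pm}>0$ from the exponential representation $(\ref{coordinate-1-2-der-sol})$ via Proposition~\ref{prop-correspondence}, then read off $U_{\pm}\geq 0$ and $V_{\pm}\geq 0$ from $(\ref{der-eq-1})$--$(\ref{der-eq-2})$ rewritten through $(\ref{coordinate-1-2-der-sol-rel})$ together with $(\ref{cond3-final})$ and Proposition~\ref{proposition-definition}, and finally get $\xi_{\pm}\leq 1$ from $(\ref{coordinate-1-2-der-sol-rel})$ again. Your write-up is more explicit than the paper's (in particular you spell out why no bootstrap between $U_{\pm}$ and $V_{\pm}$ is needed), but the argument is the same.
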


\begin{proof}
Assuming existence of solution $(Z_{\pm},Y_{\pm}) \in C^{1,1,1}(\Gamma_T)$ to
the system of integral equations (\ref{2.11})--(\ref{2.14}), we have
by Proposition \ref{prop-correspondence} and the definition (\ref{introduction-U-V}) that
$(U_{\pm},V_{\pm}) \in C^{0,0,0}(\Gamma_T)$ and $u_{\pm} \in C^{0,0}(\Omega_T)$.
By (\ref{coordinate-1-2-der-sol}), we have $\xi_{\pm}(s;t,x) > 0$ for every $(s,t,x) \in \Gamma_T$.
Then, by using relations (\ref{coordinate-1-2-der-sol-rel}), conditions (\ref{cond3-final}),
and the result of Proposition \ref{proposition-definition}, we obtain
from the system (\ref{der-eq-1}) and (\ref{der-eq-2}) that
$U_{\pm}(s;t,x) \geq 0$ and $V_{\pm}(s;t,x) \geq 0$ for every $(s,t,x) \in \Gamma_T$.
Using relations (\ref{coordinate-1-2-der-sol-rel}) again, we have $\xi_{\pm}(s;t,x) \leq 1$
for every $(s,t,x) \in \Gamma_T$. Thus,
constraints (\ref{constraints-on-solutions-derivative}) have been proved.
\end{proof}

Generally speaking, the chain rule (\ref{correspondence-U-V}) and the representation (\ref{coordinate-1-2-der-sol-rel})
only show that if $U_{\pm}(s;t,x)$ and $V_{\pm}(s;t,x)$ remain bounded and positive for $(s,t,x) \in \Gamma_T$,
then $\xi_{\pm}(s;t,x)$ may still vanish at the same points $(s,t,x) \in \Gamma_T$ for which
either $u_{\pm}(s,\eta_{\pm}(s;t,x))$ or $u_{\mp}(s,\eta_{\pm}(s;t,x))$ become unbounded.
However, divergence of $u_{\pm}(t,x)$ for $(t,x) \in \Omega_T$ contradicts to the result of
Proposition \ref{prop-correspondence}, if the solution $(Z_{\pm},Y_{\pm}) \in C^{1,1,1}(\Gamma_T)$
to the system of integral equations (\ref{2.11})--(\ref{2.14}) is obtained.
Therefore, the essence of the method of an additional argument is to ensure solvability
of the system of integral equations (\ref{2.11})--(\ref{2.14}) in $C^{1,1,1}(\Gamma_T)$,
which would guarantee strict positivity of $\xi_{\pm}(s;t,x)$ for every $(s,t,x) \in \Gamma_T$.

For completeness, we mention that if we substitute (\ref{coordinate-1-2-der-sol}), (\ref{correspondence-U-V})
and (\ref{coordinate-1-2-der-sol-rel}) to the integral equations (\ref{der-eq-1}),
then we obtain
\begin{eqnarray}
\nonumber
u_{\pm}(s,\eta_{\pm}(s;t,x)) & = & \varphi_{\pm}'(\eta_{\pm}(0;t,x)) e^{-\frac{1}{4}
\int_0^s \left[ 3 u_{\pm}(\nu,\eta_{\pm}(\nu;t,x)) + u_{\mp}(\nu,\eta_{\pm}(\nu;t,x)) \right] d\nu} \\
& \phantom{t} &
+ \int_0^s h''(\eta_{\pm}(\nu;t,x)) e^{-\frac{1}{4}
\int_{\nu}^s \left[ 3 u_{\pm}(\tau,\eta_{\pm}(\tau;t,x)) + u_{\mp}(\tau,\eta_{\pm}(\tau;t,x)) \right] d\tau} d\nu,
\label{derivative-1-2-solutions}
\end{eqnarray}
which can be thought as a weak formulation of the system of differential equations
{\small\begin{eqnarray}
\label{derivative-1-2}
\left\{ \begin{array}{l}
\frac{d u_+}{ds}(s,\eta_+(s;t,x)) + \frac{3}{4} u_+^2(s,\eta_+(s;t,x)) + \frac{1}{4} u_+(s,\eta_+(s;t,x))u_-(s,\eta_+(s;t,x)) = h''(\eta_+(s;t,x)), \\
\frac{d u_-}{ds}(s,\eta_-(s;t,x)) + \frac{3}{4} u_-^2(s,\eta_-(s;t,x)) + \frac{1}{4} u_+(s,\eta_-(s;t,x))u_-(s,\eta_-(s;t,x)) = h''(\eta_-(s;t,x)),
\end{array} \right.
\end{eqnarray}}where $0 \leq s \leq t$, subject to the initial conditions $u_{\pm}(0,\eta_{\pm}(0;t,x)) = \varphi_{\pm}'(\eta_{\pm}(0;t,x))$
and the consistency conditions $u_{\mp}(s,\eta_{\pm}(s;t,x)) = u_{\mp}(s,\eta_{\mp}(s;s,\eta_{\pm}(s;s,x)))$.
The differential system (\ref{derivative-1-2}) can be derived
by differentiating system (\ref{sym-system}) with respect to $x$ for appropriate solutions
$z_{\pm} \in C^{2,2}(\Omega_T)$ and using the characteristic equations (\ref{coordinate-1-2}).
Again, control of boundness of $u_{\pm}(t,x)$ for $(t,x) \in \Omega_T$ is very difficult within
the evolution problem (\ref{derivative-1-2}) or the system of integral equations (\ref{derivative-1-2-solutions}).
However, all these difficult steps are avoided in the method of an additional argument.

\section{Local solution to system (\ref{2.11})--(\ref{2.14})}

Here we use the method of Picard's successive approximations to prove existence of a local solution
to the system of integral equations (\ref{2.11})--(\ref{2.14}). At first, we are looking for local solutions
in the space $C^{0,0,0}(\Gamma_T)$. The fixed existence time
$T > 0$ is supposed to be small to ensure that the contraction method works. Then, we obtain local solutions
in the space $C^{1,1,1}(\Gamma_T)$ from the Schauder fixed-point theorem.
Assumptions of both Propositions \ref{proposition-definition} and
\ref{prop-correspondence} are satisfied for the local solutions in $C^{1,1,1}(\Gamma_T)$.
Thus, by correspondence between solutions to the system of integral equations (\ref{2.11})--(\ref{2.14})
and the quasilinear system (\ref{sym-system}), the results obtained in
this section yield the proof of Theorem \ref{theorem-main-local}.

The main difficulty in the proof of existence
of a local solution to the system of integral equations (\ref{2.11})--(\ref{2.14})
in $C^{0,0,0}(\Gamma_T)$ is due to the fact that the integral equation
(\ref{2.14}) is composed of unknown functions. As a result, the method of successive
approximations consists of two levels, similar to what is described in \cite{Alex2}.
In order to close the system of integral equations (\ref{2.11})--(\ref{2.14}) in $\Gamma_T$,
we use the conditions (\ref{cond1}) and (\ref{cond2}) on the function $h$ and initial data $u_0$ and $\eta_0$,
the latter conditions are rewritten for $\varphi_{\pm}$ in the form (\ref{cond-phi-intro-1}).

\begin{lemma}
Assume $h \in C^2_b(\mathbb{R}^+)$ and $\varphi_{\pm} \in C^1_b(\mathbb{R}^+)$
satisfying the constraints (\ref{cond1}) and (\ref{cond-phi-intro-1}). Define
\begin{equation}
\label{constraint-time}
T :=  \min \left(\frac{C_\varphi}{C_h},\frac{1}{15 C_\varphi}\right),
\end{equation}
where $C_{\varphi} := \max\{\| \varphi_+ \|_{C^1_b},\| \varphi_- \|_{C^1_b}\}$ and $C_h := \| h \|_{C^2_b}$.
Then, the system of integral equations (\ref{2.11})--(\ref{2.14}) admits a unique solution in class
$(Z_{\pm},Y_{\pm}) \in C^{0,0,0}({\Gamma}_T)$ such that
\begin{equation}
\label{bound-on-approximations-superior}
\| Z_{\pm} \|, \| Y_{\pm} \| \leq 2 C_{\varphi}.
\end{equation}
\label{lemma-local}
\end{lemma}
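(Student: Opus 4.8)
The plan is to set up a two-level Picard iteration exactly as signalled by the authors' remark that (\ref{2.14}) is "composed of unknown functions." The obstacle is that $Y_\pm(s;t,x)$ is defined in terms of $Z_\mp$ evaluated at $(s;s,\cdot)$, i.e. at a diagonal in the first two arguments, so the ordinary contraction in $C^{0,0,0}(\Gamma_T)$ does not close directly: the map sending a candidate $(Z_\pm,Y_\pm)$ to the right-hand sides of (\ref{2.11})--(\ref{2.14}) requires knowing $Z_\mp$ on all of $\Gamma_T$, not just on the slice we are updating. The resolution is to fix an outer iterate, say $Z_\pm^{(n)}$, use it inside (\ref{2.14}) to produce $Y_\pm^{(n)}$ via the already-known diagonal values $Z_\mp^{(n)}(s;s,\cdot)$, then feed $(Z_\pm^{(n)},Y_\pm^{(n)})$ into (\ref{2.11}) to produce $Z_\pm^{(n+1)}$, iterating $n\to\infty$.

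First I would set $M := 2C_\varphi$ and work in the closed ball $B_M := \{(Z_\pm,Y_\pm)\in C^{0,0,0}(\Gamma_T) : \|Z_\pm\|,\|Y_\pm\|\le M\}$. For the \emph{a priori} bound, apply conditions (\ref{cond1}), (\ref{cond-phi-intro-1}) and Proposition \ref{proposition-definition}: since $\varphi_\pm\le 0$, $h'\le 0$, the arguments of $\varphi_\pm$ in (\ref{2.11}) are $\ge x\ge 0$, so $Z_\pm$ is bounded above by $0$ and below by $\|\varphi_\pm\|_{C^0_b} + \|h'\|_{C^0_b}\, t \le C_\varphi + C_h T \le 2C_\varphi$ in absolute value, using $T\le C_\varphi/C_h$. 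The bound on $Y_\pm$ follows from (\ref{2.14}) since $Y_\pm$ is just a value of $Z_\mp$. This shows the iteration map preserves $B_M$.

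Next, for contraction: take two iterates $(Z_\pm,Y_\pm)$ and $(\tilde Z_\pm,\tilde Y_\pm)$ in $B_M$, subtract the integral equations, and estimate. The key inputs are the Lipschitz bounds $|\varphi_\pm(a)-\varphi_\pm(b)|\le C_\varphi|a-b|$ and $|h'(a)-h'(b)|\le C_h|a-b|$, together with the fact that each inner integral $\tfrac14\int(3Z_\pm+Y_\pm)d\nu$ over an interval of length $\le T$ changes by at most $\tfrac14\cdot 4\cdot(3+1)\cdot T\cdot(\text{sup difference})$ — so each evaluation of $\varphi_\pm$ or $h'$ contributes a factor proportional to $C_\varphi T$ or $C_h T^2$ times the sup-norm distance. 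For the $Y$-equation the diagonal evaluation $Z_\mp(s;s,\cdot)$ is handled by combining the sup-norm bound on $\|Z_\mp-\tilde Z_\mp\|$ with the Lipschitz-in-$x$ control coming from $\|\partial_x Z_\mp\|$ — but at the $C^{0,0,0}$ level we do not yet have $x$-derivatives, so instead I would iterate \emph{only the outer variable} $Z_\pm$ in sup-norm and treat $Y_\pm$ as a dependent quantity; the composition $Z_\mp(s;s,\text{argument})$ then differs by $\|Z_\mp-\tilde Z_\mp\|$ plus a modulus-of-continuity term in the argument, and since both $Z$'s satisfy (\ref{2.11}) with Lipschitz $\varphi_\pm$, one propagates a uniform-in-$\Gamma_T$ Lipschitz-in-$x$ constant through the iteration (it is bounded by $C_\varphi$ times a convergent geometric factor once $T$ is small). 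Collecting all terms, the contraction constant is bounded by something of the form $\tfrac14(3C_\varphi T + \tfrac32 C_h T^2 + \cdots)\cdot(\text{const})$, and the choice $T\le \min(C_\varphi/C_h, 1/(15C_\varphi))$ makes it $<1$; the numerical factor $15$ is exactly what is needed to absorb the worst-case combinatorial constants $(3+1)$, the $1/4$, and the feedback of $Y_\pm$ through $Z_\mp$.

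The main obstacle, then, is the bookkeeping for the composed equation (\ref{2.14}): one must verify that the contraction estimate genuinely closes despite $Y_\pm$ depending on $Z_\mp$ evaluated at a shifted point, and that the two-level scheme converges in $C^{0,0,0}(\Gamma_T)$ without needing a priori $C^1$ control. I expect this to be routine once the right quantities (sup-norm of $Z_\pm$ plus a uniform Lipschitz-in-$x$ modulus) are iterated together, but it is where the constant $15$ and the constraint $T\le 1/(15C_\varphi)$ are forced. Uniqueness in $C^{0,0,0}(\Gamma_T)$ follows from the same contraction estimate applied to any two solutions in $B_M$, and the bound (\ref{bound-on-approximations-superior}) is the $B_M$-invariance established above.
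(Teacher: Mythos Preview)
Your approach is essentially the paper's: a two-level Picard scheme, with the outer iteration freezing $Z_{\mp}$ inside the composition (\ref{2.14}) and an inner iteration resolving the remaining implicit dependence in (\ref{2.11}). What you correctly flag as the ``main obstacle'' --- that the $Y_\pm$-difference estimate needs Lipschitz-in-$x$ control on $Z_\mp$ --- is not sidestepped in the paper but handled head-on: its Step~2 proves by induction that each outer iterate actually lies in $C^{0,0,1}(\Gamma_T)$ with the uniform bound $\|\partial_x Z_{\pm(n)}\|\le 3C_\varphi$, by writing down and solving the linear integral system obtained by differentiating (\ref{2.17})--(\ref{2.20}) in $x$; this derivative bound is precisely what enters the contraction constant $K_\pm(T)=\tfrac14(3C_\varphi T+\tfrac32 C_hT^2+\|\partial_x Z_{\mp(n-1)}\|T)$ and forces $4K_\pm(T)\le\tfrac{15}{2}C_\varphi T\le\tfrac12$ under (\ref{constraint-time}).
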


\begin{proof}
By Proposition \ref{proposition-definition}, the system of integral equations (\ref{2.11})--(\ref{2.14})
is closed in $\Gamma_T$ in the sense of bounds (\ref{constraints-on-solutions}).
In order to apply the Picard method, we start
with the initial approximations
\begin{equation}
\label{initial-values-first-level}
Z_{\pm (0)}(s;t,x) = Y_{\pm (0)}(s;t,x) = \varphi_{\pm}(x)
\end{equation}
and define the successive approximations $\{ Z_{\pm (n)}, Y_{\pm (n)} \}_{n \in \mathbb{N}}$
from the recursive iterations based on the system of integral equations (\ref{2.11})--(\ref{2.14})
for $n \in \mathbb{N}$:
\begin{eqnarray}
\nonumber
Z_{\pm (n)}(s;t,x) & = & \varphi_{\pm}\left( x - \frac{1}{4} \int_0^t \left[ 3 Z_{\pm (n)}(\nu;t,x) +
Y_{\pm (n)}(\nu;t,x) \right] d \nu \right) \\
\label{2.17} & \phantom{t} & +
\int_0^s h'\left(x - \frac{1}{4} \int_{\nu}^t \left[ 3 Z_{\pm (n)}(\tau;t,x) + Y_{\pm (n)}(\tau;t,x) \right] d \tau \right) d \nu,
\end{eqnarray}
and
\begin{eqnarray}
\label{2.20}
Y_{\pm (n)}(s;t,x) = Z_{\mp (n-1)}\left(s;s,x - \frac{1}{4} \int_s^t \left[ 3 Z_{\pm (n)}(\nu;t,x) + Y_{\pm (n)}(\nu;t,x) \right] d\nu\right).
\end{eqnarray}

The system (\ref{2.17})--(\ref{2.20}) is implicit in $(Z_{\pm (n)},Y_{\pm (n)})$. Therefore, for each $n \in \mathbb{N}$, we obtain
$Z_{\pm (n)},Y_{\pm (n)}$ from another sequence of successive approximations $\{ Z_{\pm (n)}^{(k)}, Y_{\pm (n)}^{(k)} \}_{k \in \mathbb{N}}$
starting with the initial approximations
\begin{equation}
\label{initial-values-second-level}
Z^{(0)}_{\pm (n)}(s;t,x) = Z_{\pm (n-1)}(s;t,x) \quad \mbox{\rm and} \quad Y^{(0)}_{\pm (n)}(s;t,x) = Y_{\pm (n-1)}(s;t,x),
\quad n \in \mathbb{N},
\end{equation}
which is defined at least for $n = 1$. Successive approximations $\{ Z_{\pm (n)}^{(k)}, Y_{\pm (n)}^{(k)} \}_{k \in \mathbb{N}}$
are defined by the explicit iteration scheme for $k \in \mathbb{N}$:
\begin{eqnarray}
\nonumber
Z^{(k)}_{\pm (n)}(s;t,x) & = & \varphi_{\pm}\left(x - \frac{1}{4} \int_0^t
\left[ 3 Z^{(k-1)}_{\pm (n)}(\nu;t,x) + Y^{(k-1)}_{\pm (n)}(\nu;t,x) \right] d \nu \right) \\
\label{2.21} & \phantom{t} & +
\int_0^s h'\left(x - \frac{1}{4} \int_{\nu}^t \left[ 3 Z^{(k-1)}_{\pm (n)}(\tau;t,x) + Y^{(k-1)}_{\pm (n)}(\tau;t,x) \right] d \tau\right) d \nu,
\end{eqnarray}
and
\begin{eqnarray}
Y^{(k)}_{\pm (n)}(s;t,x) = Z_{\mp (n-1)}\left(s;s,x - \frac{1}{4} \int_s^t \left[ 3 Z^{(k-1)}_{\pm (n)}(\nu;t,x) + Y^{(k-1)}_{\pm (n)}(\nu;t,x) \right] d\nu\right).
\label{2.24}
\end{eqnarray}
The construction of successive approximations to the two-level system in $C^{0,0,0}(\Gamma_T)$ is broken into three steps. \\

{\bf Step 1.} We prove for every $n \in \mathbb{N}$ that the sequence
$\{ Z_{\pm (n)}^{(k)}, Y_{\pm (n)}^{(k)} \}_{k \in \mathbb{N}}$ satisfying
(\ref{initial-values-second-level}), (\ref{2.21}), and (\ref{2.24})
converges in $C^{0,0,0}(\Gamma_T)$ for a fixed $T > 0$ satisfying (\ref{constraint-time}), so that we can define
\begin{equation}
\label{limits-second-level}
Z_{\pm (n)}(s;t,x) := \lim_{k \to \infty} Z_{\pm (n)}^{(k)}(s;t,x) \quad \mbox{\rm and} \quad
Y_{\pm (n)}(s;t,x) := \lim_{k \to \infty} Y_{\pm (n)}^{(k)}(s;t,x), \quad n \in \mathbb{N}.
\end{equation}
Let us introduce $C_{\varphi} := \max\{\| \varphi_+ \|_{C^1_b},\| \varphi_- \|_{C^1_b}\}$ and $C_h := \| h \|_{C^2_b}$.
It follows from (\ref{2.21}) and (\ref{2.24}) that
\begin{equation}
\label{bound-on-approximations-uniform}
\| Z^{(k)}_{\pm (n)}\| \leq C_{\varphi} + C_h T \leq 2 C_{\varphi}, \quad
\| Y^{(k)}_{\pm (n)}\| = \| Z_{\pm (n-1)}\|,
\quad k \in \mathbb{N},
\end{equation}
where we have used $C_h T \leq C_{\varphi}$ according to the constraint (\ref{constraint-time}).
Since the bounds (\ref{bound-on-approximations-uniform})
are independent of $k$, if convergence to the limits (\ref{limits-second-level}) can be proved
for each $n \in \mathbb{N}$, then by the induction method, we have
\begin{equation}
\label{bound-on-approximations}
\| Z_{\pm (n)} \|, \| Y_{\pm (n)} \| \leq 2 C_{\varphi}, \quad n \in \mathbb{N}.
\end{equation}
Bounds (\ref{bound-on-approximations}) are also satisfied for $n = 0$.
Now, we establish convergence to the limits in (\ref{limits-second-level}).

By using the fundamental theorem
of calculus and the estimates similar to those in the proof of Proposition \ref{prop-correspondence},
we derive the bounds on the distance between two successive approximations:
\begin{equation}
\label{bounds-second-level}
3 \| Z^{(k+1)}_{\pm (n)} - Z^{(k)}_{\pm (n)} \| + \| Y^{(k+1)}_{\pm (n)} - Y^{(k)}_{\pm (n)}\|
\leq K_{\pm}(T) \left( 3 \| Z^{(k)}_{\pm (n)} - Z^{(k-1)}_{\pm (n)}  \| + \| Y^{(k)}_{\pm (n)} - Y^{(k-1)}_{\pm (n)}  \| \right),
\end{equation}
where we have denoted
\begin{equation}
\label{def-K}
K_{\pm}(T) := \frac{1}{4} \left( 3 C_{\varphi} T + \frac{3}{2} C_h T^2 + \| \partial_x Z_{\mp (n-1)} \| T \right).
\end{equation}

Let us assume by induction that $(Z_{\pm (n-1)},Y_{\pm (n-1)}) \in C^{0,0,1}(\Gamma_T)$ satisfying
\begin{equation}
\label{bound-on-derivatives}
\| \partial_x Z_{\pm (n-1)} \| \le 3 C_\varphi, \quad
\| \partial_x Y_{\pm (n-1)} \| \le 4 C_\varphi, \quad n \in \mathbb{N},
\end{equation}
which is satisfied at least for $n = 1$. It follows from
(\ref{bound-on-approximations}) and (\ref{bounds-second-level}) that
\begin{equation}
\label{bound-on-first-iteration}
\| Z^{(1)}_{\pm (n)} - Z^{(0)}_{\pm (n)} \|, \| Y^{(1)}_{\pm (n)} - Y^{(0)}_{\pm (n)}\| \leq 8 K_{\pm}(T) C_{\varphi}.
\end{equation}
Continuing on with (\ref{bounds-second-level}) and (\ref{bound-on-first-iteration}),
we obtain
\begin{equation}
\label{bound-on-next-iterations}
\| Z^{(k+1)}_{\pm (n)} - Z^{(k)}_{\pm (n)} \|, \| Y^{(k+1)}_{\pm (n)} - Y^{(k)}_{\pm (n)}\| \leq (4 K_{\pm}(T))^k (8 K_{\pm}(T) C_{\varphi}), \quad k \in \mathbb{N}.
\end{equation}
Therefore, the sequence $\{ Z_{\pm (n)}^{(k)}, Y_{\pm (n)}^{(k)} \}_{k \in \mathbb{N}}$ is Cauchy in $C^{0,0,0}(\Gamma_T)$
for each $n \in \mathbb{N}$ if $4 K_{\pm}(T) < 1$. From the definition (\ref{def-K}), bound (\ref{bound-on-derivatives}),
and $C_h T \leq C_{\varphi}$,
we have
$$
4K_{\pm}(T) \leq 3C_{\varphi} T + \frac{3}{2} C_{\varphi} T + 3 C_{\varphi} T = \frac{15}{2} C_{\varphi} T \leq \frac{1}{2},
$$
if $T \leq \frac{1}{15 C_{\varphi}}$, according to the constraint (\ref{constraint-time}).
Hence, for each $n \in \mathbb{N}$, the sequence $\{ Z_{\pm (n)}^{(k)}, Y_{\pm (n)}^{(k)} \}_{k \in \mathbb{N}}$
converges as $k \to \infty$ to a limit denoted by $(Z_{\pm (n)}, Y_{\pm (n)})$ in $C^{0,0,0}(\Gamma_T)$,
as in (\ref{limits-second-level}).

Taking the limit $k \to \infty$ in the recursive system (\ref{2.21})--(\ref{2.24}), we obtain
the recursive system (\ref{2.17})--(\ref{2.20}) for $(Z_{\pm (n)}, Y_{\pm (n)})$ in $C^{0,0,0}(\Gamma_T)$.
Therefore, $(Z_{\pm (n)}, Y_{\pm (n)})$ is a local solution to the system (\ref{2.17})--(\ref{2.20})
for each $n \in \mathbb{N}$ that satisfies bounds (\ref{bound-on-approximations}).
Moreover, from the contraction principle, it follows that the local solution to the system (\ref{2.17})--(\ref{2.20})
is unique in $C^{0,0,0}(\Gamma_T)$ for each $n \in \mathbb{N}$. \\

{\bf Step 2.} We prove that for each $n \in \mathbb{N}$,
the solution $(Z_{\pm (n)}, Y_{\pm (n)}) \in C^{0,0,0}(\Gamma_T)$ to the system of integral equations
(\ref{2.17})--(\ref{2.20}) constructed in Step 1 belongs actually to $C^{0,0,1}(\Gamma_T)$
and satisfies the same bounds (\ref{bound-on-derivatives}) as the previous approximation $(Z_{\pm (n-1)}, Y_{\pm (n-1)})$.
By differentiating the system (\ref{2.17})--(\ref{2.20}) with respect to $x$,
we obtain a system of linear integral equations
\begin{eqnarray}
U_{\pm (n)}(s;t,x) & = &  \varphi'_{\pm}(\cdot)
\left( 1 - \frac{1}{4} \int_0^t (3 U_{\pm (n)}(\nu;t,x) + V_{\pm (n)}(\nu;t,x)) d\nu \right) \nonumber \\
& \phantom{t} & \quad +\int_0^s h''(\cdot)
\left( 1 - \frac{1}{4} \int_{\nu }^t (3 U_{\pm (n)}(\tau;t,x) + V_{\pm (n)}(\tau;t,x)) d\tau \right) d\nu
\label{d2.26}
\end{eqnarray}
and
\begin{eqnarray}
\label{d2.29}
V_{\pm (n)}(s;t,x) = \partial_x Z_{\mp (n-1)}(\cdot)
\left( 1 - \frac{1}{4} \int_0^t (3 U_{\pm (n)}(\nu;t,x) + V_{\pm (n)}(\nu;t,x)) d \nu \right),
\end{eqnarray}
where the arguments of $\varphi_{\pm}'$, $h''$, and $\partial_x Z_{\mp (n-1)}$ are the same as in the system
(\ref{2.17})--(\ref{2.20}). We recall that $\varphi'_{\pm}$, $h''$ are continuous
and by the method of induction, $\partial_x Z_{\mp (n-1)}$ is also taken to be continuous,
for each $n \in \mathbb{N}$. Since  $(Z_{\pm (n)},Y_{\pm (n)}) \in C^{0,0,0}(\Gamma_T)$
is substituted in the arguments of $\varphi_{\pm}'$, $h''$, and $\partial_x Z_{\mp (n-1)}$,
we know that the coefficients of the system of linear integral equations (\ref{d2.26})--(\ref{d2.29})
are all continuous functions in $\Gamma_T$.

We first claim that there exists a unique solution of the system
of linear integral equations (\ref{d2.26})--(\ref{d2.29}) in $C^{0,0,0}(\Gamma_T)$.
Indeed, let us rewrite the system in the form
$$
(I + P) \left[ \begin{array}{c} U_{\pm (n)} \\V_{\pm (n)}\end{array} \right] =
\left[ \begin{array}{c} \varphi'_{\pm}(\cdot) + \int_0^s h''(\cdot) d \nu \\
\partial_x Z_{\mp (n-1)}(\cdot) \end{array} \right],
$$
where $P$ is a perturbation to the identity matrix $I$ given by
$$
P \left[ \begin{array}{c} U_{\pm (n)} \\V_{\pm (n)}\end{array} \right] :=
\frac{1}{4} \left[ \begin{array}{l} \varphi'_{\pm}(\cdot) \int_0^t (3 U_{\pm (n)}(\nu;t,x) + V_{\pm (n)}(\nu;t,x)) d\nu \\
\phantom{texttext} + \int_0^s h''(\cdot) \int_{\nu }^t (3 U_{\pm (n)}(\tau;t,x) + V_{\pm (n)}(\tau;t,x)) d\tau  d \nu \\
\partial_x Z_{\mp (n-1)}(\cdot) \int_0^t (3 U_{\pm (n)}(\nu;t,x) + V_{\pm (n)}(\nu;t,x)) d \nu \end{array} \right].
$$
We estimate the norm of each component of the perturbation $P$ in $C^{0,0,0}(\Gamma_T)$ as follows
\begin{equation}
\label{matrix-P}
\left\| P \left[ \begin{array}{c} U_{\pm (n)} \\V_{\pm (n)}\end{array} \right] \right\|
\leq \frac{1}{4} T C_{\varphi} \left[ \begin{array}{cc} 6 & 2 \\ 9 & 3 \end{array} \right]
\left[ \begin{array}{c} \| U_{\pm (n)} \| \\ \| V_{\pm (n)} \| \end{array} \right],
\end{equation}
where we have used $C_h T \leq C_{\varphi}$ and $\| \partial_x Z_{\mp (n-1)} \| \leq 3 C_{\varphi}$.
Eigenvalues of the matrix in (\ref{matrix-P}) are $0$ and $9$.
If $T C_{\varphi} \leq \frac{1}{15}$, the norm induced by the perturbation $P$ is strictly smaller than one.
Therefore, the matrix integral operator $I + P$ is invertible and a unique solution $(U_{\pm (n)},V_{\pm (n)})$
to the system of linear integral equations (\ref{d2.26})--(\ref{d2.29}) exists in $C^{0,0,0}(\Gamma_T)$.

Next, for every $(s,t,x_0) \in \Gamma_T$, we claim that the quotients
$$
\frac{Z_{\pm (n)}(s;t,x) - Z_{\pm (n)}(s;t,x_0)}{x-x_0} \quad \mbox{\rm and} \quad
\frac{Y_{\pm (n)}(s;t,x) - Y_{\pm (n)}(s;t,x_0)}{x-x_0}
$$
remain bounded as $x \to x_0$ for every $x_0 \in \mathbb{R}^+$. This is shown by
repeating the estimates for the system of integral equations (\ref{2.17})--(\ref{2.20}), where
we are using the constraint on $T$ in (\ref{constraint-time}),
and the smoothness properties on $\varphi_{\pm}$, $h$, and $Z_{\mp (n-1)}$.
Now, by repeating the estimates for bounded functions
$$
E_{\pm (n)}(s;t,x,x_0) := \frac{Z_{\pm (n)}(s;t,x) - Z_{\pm (n)}(s;t,x_0)}{x-x_0} - U_{\pm (n)}(s;t,x_0)
$$
and
$$
F_{\pm (n)}(s;t,x,x_0) := \frac{Y_{\pm (n)}(s;t,x) - Y_{\pm (n)}(s;t,x_0)}{x-x_0} - V_{\pm (n)}(s;t,x_0)
$$
and using uniqueness of solutions of the integral equations (\ref{2.17})--(\ref{2.20})
and their first variations (\ref{d2.26})--(\ref{d2.29}), we obtain for every $(s,t,x_0) \in \Gamma_T$ that
$$
\lim_{x \to x_0} E_{\pm (n)}(s;t,x,x_0) = 0 \quad \mbox{\rm and} \quad \lim_{x \to x_0} F_{\pm (n)}(s;t,x,x_0) = 0.
$$
Therefore, $(Z_{\pm (n)}, Y_{\pm (n)})$
are continuously differentiable with respect to $x$ at every $x_0 \in \mathbb{R}^+$ and
\begin{equation}
\label{equivalence-derivatives-x}
\partial_x Z_{\pm (n)}(s;t,x) = U_{\pm (n)}(s;t,x) \quad \mbox{\rm and} \quad
\partial_x Y_{\pm (n)}(s;t,x) = V_{\pm (n)}(s;t,x), \quad (s,t,x) \in \Gamma_T.
\end{equation}

It remains to verify bounds (\ref{bound-on-derivatives}) for $(Z_{\pm (n)}, Y_{\pm (n)})$.
It follows from the second line of (\ref{matrix-P}) substituted to (\ref{d2.29}) that
\begin{eqnarray}
\nonumber
\| V_{\pm (n)}\| & \leq & \frac{3 C_{\varphi}}{1 - \frac{3}{4} C_{\varphi} T} \left(1 + \frac{3T}{4} \| U_{\pm (n)}\|\right) \\
& \leq & \frac{60 C_{\varphi}}{19} \left( 1 + \frac{3T}{4} \| U_{\pm (n)}\| \right).
\label{bounds-1-Y-der}
\end{eqnarray}
where we have used $C_{\varphi} T \leq \frac{1}{15}$. Substituting this estimate to the first line
of (\ref{matrix-P}) and to equation (\ref{d2.26}) yields
\begin{eqnarray}
\nonumber
\| U_{\pm (n)}\| & \leq & \frac{2 C_{\varphi}}{1 - 3 C_{\varphi} T} \left(1 + C_{\varphi} T \right) \\
& \leq & \frac{5}{2} C_{\varphi} \left(1 + C_{\varphi} T \right) \leq \frac{8}{3} C_{\varphi},
\label{bounds-1-Z-der}
\end{eqnarray}
where we have used again $C_{\varphi} T \leq \frac{1}{15}$. By using
the correspondence (\ref{equivalence-derivatives-x}), we obtain
\begin{equation}
\label{bound-on-derivatives-n}
\| \partial_x Z_{\pm (n)} \| \le 3 C_\varphi, \quad
\| \partial_x Y_{\pm (n)} \| \le 4 C_\varphi, \quad n \in \mathbb{N}.
\end{equation}
The validity of the bounds (\ref{bound-on-derivatives}) for every $n \in \mathbb{N}$ is verified by the induction method. \\

{\bf Step 3.} We prove under the same constraint (\ref{constraint-time}) on $T$
that the sequence $\{ Z_{\pm (n)}, Y_{\pm (n)} \}_{n \in \mathbb{N}}$  defined by
the recursive system (\ref{2.17})--(\ref{2.20}) converges in $C^{0,0,0}(\Gamma_T)$
to the solution $(Z_{\pm}, Y_{\pm}) \in C^{0,0,0}(\Gamma_T)$ satisfying
the system of integral equations (\ref{2.11})--(\ref{2.14}) and bound (\ref{bound-on-approximations-superior}).

After the convergence to the limits (\ref{limits-second-level}) is proved, the index $n$ in
the system of integral equations (\ref{2.17})--(\ref{2.20}) can be incremented by one
using the induction method. Convergence of iterations $\{ Z_{\pm (n)}, Y_{\pm (n)} \}_{n \in \mathbb{N}}$  can be considered
in $C^{0,0,0}(\Gamma_T)$ with standard methods.

It follows from (\ref{2.17}) and (\ref{2.20}) with the fundamental theorem of calculus that
\begin{eqnarray*}
\left\| Z_{\pm (n + 1)} - Z_{\pm (n)} \right\| \leqslant \frac{1}{4}\left( C_\varphi T + \frac{1}{2} C_h T^2 \right)
\left( 3 \left\| Z_{\pm (n + 1)} - Z_{\pm (n)} \right\| + \left\| Y_{\pm (n + 1)} - Y_{\pm (n)} \right\| \right)
\end{eqnarray*}
and
\begin{eqnarray*}
\left\| Y_{\pm (n + 1)} - Y_{\pm (n)} \right\| \leqslant
\frac{1}{4} T \left\| \partial_x Z_{\mp (n)} \right\|
\left( 3 \left\| Z_{\pm (n + 1)} - Z_{\pm (n)} \right\| + \left\| Y_{\pm (n + 1)} - Y_{\pm (n)} \right\| \right) +
\left\| Z_{\mp (n)} - Z_{\mp (n - 1)} \right\|,
\end{eqnarray*}
where $C_{\varphi}$ and $C_{h}$ are the same constants as above. Under the conditions (\ref{constraint-time})
and (\ref{bound-on-derivatives-n}), we obtain
\begin{eqnarray*}
\left\| Z_{\pm (n + 1)} - Z_{\pm (n)} \right\| \leqslant \frac{1}{40}
\left( 3 \left\| Z_{\pm (n + 1)} - Z_{\pm (n)} \right\| + \left\| Y_{\pm (n + 1)} - Y_{\pm (n)} \right\| \right)
\end{eqnarray*}
and
\begin{eqnarray*}
\left\| Y_{\pm (n + 1)} - Y_{\pm (n)} \right\| \leqslant
\frac{1}{20} \left( 3 \left\| Z_{\pm (n + 1)} - Z_{\pm (n)} \right\| + \left\| Y_{\pm (n + 1)} - Y_{\pm (n)} \right\| \right) +
\left\| Z_{\mp (n)} - Z_{\mp (n - 1)} \right\|.
\end{eqnarray*}
From the inequalities above, we obtain
\begin{eqnarray*}
\left\| Z_{\pm (n + 1)} - Z_{\pm (n)} \right\| \leqslant \frac{1}{35} \left\| Z_{\mp (n)} - Z_{\mp (n-1)} \right\|,
\end{eqnarray*}
and hence
\begin{eqnarray*}
\left\| Z_{+ (n + 1)} - Z_{+ (n)} \right\| + \left\| Z_{- (n + 1)} - Z_{- (n)} \right\|
\leqslant \frac{1}{35} \left( \left\| Z_{+ (n)} - Z_{+ (n-1)} \right\| + \left\| Z_{- (n)} - Z_{- (n-1)} \right\| \right).
\end{eqnarray*}
Therefore, the iteration map defined by the system (\ref{2.17})--(\ref{2.20}) is a contraction
in $C^{0,0,0}(\Gamma_T)$. Hence,
the sequence $\{ Z_{\pm (n)}, Y_{\pm (n)} \}_{n \in \mathbb{N}}$ is Cauchy in $C^{0,0,0}(\Gamma_T)$ and
it converges  as $n \to \infty$ to a limit, denoted as $(Z_{\pm},Y_{\pm})$, defined in the same function space.
Moreover, taking the limit $n \to \infty$ in the iterative system (\ref{2.17})--(\ref{2.20}),
we obtain the system of integral equations (\ref{2.11})--(\ref{2.14}) for the limiting functions $(Z_{\pm},Y_{\pm})$.
Therefore, the limiting functions $(Z_{\pm},Y_{\pm})$ are solutions of the system (\ref{2.11})--(\ref{2.14})
in $C^{0,0,0}(\Gamma_T)$. Since the sequence $\{ Z_{\pm (n)}, Y_{\pm (n)} \}_{n \in \mathbb{N}}$ in $C^{0,0,0}(\Gamma_T)$ satisfies
the bounds (\ref{bound-on-approximations}) that are independent of $n$, the limiting functions $(Z_{\pm},Y_{\pm})$ satisfy the same bounds,
which become bounds (\ref{bound-on-approximations-superior}). Finally, it follows from the contraction method
that the local solution $(Z_{\pm},Y_{\pm})$ is unique in $C^{0,0,0}(\Gamma_T)$.
\end{proof}

\begin{lemma}
\label{lemma-implicit}
Under conditions of Lemma \ref{lemma-local}, the unique local solution to
the system of integral equations (\ref{2.11})--(\ref{2.14}) belongs to the class
$(Z_{\pm}, Y_{\pm}) \in C^{0,0,1}({\Gamma}_T)$ and satisfies
\begin{equation}
\label{bound-superior}
\| \partial_x Z_+ \| + \| \partial_x Z_- \|\leq 15 C_{\varphi}, \quad
\| \partial_x Y_+ \| + \| \partial_x Y_- \| \leq 45 C_{\varphi}.
\end{equation}
\end{lemma}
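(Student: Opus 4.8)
The plan is to prove that the $C^{0,0,0}(\Gamma_T)$ solution $(Z_{\pm},Y_{\pm})$ of (\ref{2.11})--(\ref{2.14}) obtained in Lemma~\ref{lemma-local} is in fact continuously differentiable in $x$, with bounded derivative equal to the solution $(U_{\pm},V_{\pm})$ of the linear integral system produced by differentiating (\ref{2.11})--(\ref{2.14}) in $x$, namely (\ref{der-eq-1})--(\ref{der-eq-2}), which is the $n\to\infty$ form of the iterate system (\ref{d2.26})--(\ref{d2.29}), with the already-constructed $(Z_{\pm},Y_{\pm})$ inserted into every composed argument and with $\partial_x Z_{\mp}$ in the equation for $V_{\pm}$ understood as the diagonal trace $U_{\mp}(s;s,\cdot)$ of the unknown. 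Once $(U_{\pm},V_{\pm})$ is shown to exist in $C^{0,0,0}(\Gamma_T)$, to obey the required bounds, and to equal $\partial_x(Z_{\pm},Y_{\pm})$, membership in $C^{0,0,1}(\Gamma_T)$ and the inequalities (\ref{bound-superior}) follow at once. I would emphasize at the outset why this is not immediate from Lemma~\ref{lemma-local}: the iterates $(Z_{\pm(n)},Y_{\pm(n)})$ are uniformly bounded in $C^{0,0,1}(\Gamma_T)$ by (\ref{bound-on-derivatives-n}) and converge in $C^{0,0,0}(\Gamma_T)$, but their $x$-derivatives need not form a Cauchy sequence, because controlling differences of the right-hand sides of (\ref{d2.26})--(\ref{d2.29}) at a geometric rate would require $\varphi_{\pm}''$ and $h'''$, which are not available under $\varphi_{\pm}\in C^1_b$ and $h\in C^2_b$. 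What is available is that the derivative iterates are uniformly bounded and equicontinuous on $\Gamma_T$, the uniform modulus of continuity coming from that of $\varphi_{\pm}'$ and $h''$ composed with Lipschitz-in-$x$ maps whose Lipschitz constants are controlled by (\ref{bound-on-approximations}) and (\ref{bound-on-derivatives-n}). Hence a compactness argument replaces the contraction, and this is the role of the Schauder fixed point theorem.

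First I would record the a priori Lipschitz-in-$x$ bounds for the limit itself: since $(Z_{\pm},Y_{\pm})$ solves (\ref{2.11})--(\ref{2.14}), the estimates already used in Step~2 of the proof of Lemma~\ref{lemma-local} — under (\ref{constraint-time}), i.e., $C_hT\le C_{\varphi}$ and $C_{\varphi}T\le\frac{1}{15}$ — give $|Z_{\pm}(s;t,x)-Z_{\pm}(s;t,x')|\le 3C_{\varphi}|x-x'|$ and $|Y_{\pm}(s;t,x)-Y_{\pm}(s;t,x')|\le 4C_{\varphi}|x-x'|$ on $\Gamma_T$. In particular the coefficients of the variational system — $\varphi_{\pm}'$ and $h''$ composed with the continuous maps $x\mapsto x-\frac{1}{4}\int\cdots$ appearing in (\ref{2.11})--(\ref{2.14}) — are continuous and bounded on $\Gamma_T$.

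Next I would solve the variational system in $C^{0,0,0}(\Gamma_T)$ by the Schauder fixed point theorem on the closed, bounded, convex set $\mathcal{B}:=\{(U_{\pm},V_{\pm})\in C^{0,0,0}(\Gamma_T):\ \|U_{\pm}\|\le 3C_{\varphi},\ \|V_{\pm}\|\le 4C_{\varphi}\}$, with $\mathcal{A}$ the map given by the right-hand sides of (\ref{d2.26})--(\ref{d2.29}) (quadratic in its $V$-component because of the diagonal coupling $U_{\mp}(s;s,\cdot)$). The self-map property $\mathcal{A}(\mathcal{B})\subseteq\mathcal{B}$ is established exactly as in Step~2 of the proof of Lemma~\ref{lemma-local}, using $C_hT\le C_{\varphi}$, $C_{\varphi}T\le\frac{1}{15}$, the Lipschitz bound $\|\partial_x Z_{\mp}\|\le 3C_{\varphi}$ from the previous step, and the matrix estimate with eigenvalues $0$ and $9$ of (\ref{matrix-P}); it yields $\|V_{\pm}\|\le 3C_{\varphi}(1+\frac{T}{4}(3\|U_{\pm}\|+\|V_{\pm}\|))$ and $\|U_{\pm}\|\le 2C_{\varphi}(1+\frac{T}{4}(3\|U_{\pm}\|+\|V_{\pm}\|))$, which close well within $3C_{\varphi}$ and $4C_{\varphi}$ respectively, hence a fortiori within the looser bounds (\ref{bound-superior}). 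Continuity of $\mathcal{A}$ on $\mathcal{B}$ is routine. The genuinely delicate point is compactness of $\mathcal{A}(\mathcal{B})$: since $\Gamma_T$ is unbounded in $x$, the Arzel\`a--Ascoli theorem with the sup norm does not apply directly, so I would use that the integral structure of (\ref{d2.26})--(\ref{d2.29}) makes the image equicontinuous in $s$ and $t$, while the Lipschitz control of the coefficients makes it equicontinuous in $x$, and then obtain the fixed point by a diagonal argument over a compact exhaustion of $\Gamma_T$ (equivalently, by Schauder in the topology of uniform convergence on compact subsets, in which the equicontinuous bounded set $\mathcal{A}(\mathcal{B})$ is relatively compact); the diagonal coupling through $U_{\mp}(s;s,\cdot)$ is carried along as in the two-level scheme of Lemma~\ref{lemma-local}.

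Finally, I would identify $(U_{\pm},V_{\pm})$ with $\partial_x(Z_{\pm},Y_{\pm})$ by the difference-quotient argument of Step~2 of Lemma~\ref{lemma-local}: fixing $(s,t,x_0)\in\Gamma_T$ and setting $E_{\pm}(s;t,x,x_0):=\frac{Z_{\pm}(s;t,x)-Z_{\pm}(s;t,x_0)}{x-x_0}-U_{\pm}(s;t,x_0)$ together with the analogous $F_{\pm}$ built from $Y_{\pm}$ and $V_{\pm}$, subtracting (\ref{2.11})--(\ref{2.14}) at $x$ and at $x_0$ and the variational equations at $x_0$, and using continuity of $\varphi_{\pm}'$ and $h''$ with the Lipschitz bounds on $(Z_{\pm},Y_{\pm})$, one gets a closed linear system of estimates for $(E_{\pm},F_{\pm})$ with a contractive coefficient forcing $E_{\pm},F_{\pm}\to 0$ as $x\to x_0$. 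Hence $(Z_{\pm},Y_{\pm})$ is continuously differentiable in $x$ with $\partial_x Z_{\pm}=U_{\pm}$ and $\partial_x Y_{\pm}=V_{\pm}$, so $(Z_{\pm},Y_{\pm})\in C^{0,0,1}(\Gamma_T)$, and (\ref{bound-superior}) holds since $(U_{\pm},V_{\pm})\in\mathcal{B}$. I expect the main obstacle to be precisely the compactness of $\mathcal{A}(\mathcal{B})$ on the unbounded strip $\Gamma_T$ together with the diagonal coupling $U_{\mp}(s;s,\cdot)$ — which is why the Schauder machinery, rather than a one-line Banach contraction, is needed.
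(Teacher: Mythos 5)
Your overall architecture --- pass to the variational integral system (\ref{der-eq-1})--(\ref{der-eq-2}) with coefficients frozen along the already-constructed limit $(Z_{\pm},Y_{\pm})$, solve it in $C^{0,0,0}(\Gamma_T)$, then identify the solution with $\partial_x(Z_{\pm},Y_{\pm})$ via the difference-quotient functions $E_{\pm},F_{\pm}$ --- is the paper's, and your final identification step matches it essentially verbatim. The gap is in the middle step. You assert that a contraction argument is unavailable and therefore rest the fixed point on compactness of $\mathcal{A}(\mathcal{B})$, claiming equicontinuity in $x$ from the continuity of the coefficients. But the image $\mathcal{A}(U_{\pm},V_{\pm})$ depends on $x$ not only through the fixed continuous coefficients $\varphi_{\pm}'(\cdot)$, $h''(\cdot)$, but also through the factors $1-\frac{1}{4}\int(3U_{\pm}+V_{\pm})\,d\nu$ and through the composition $U_{\mp}(s;s,\eta_{\pm}(s;t,x))$, i.e.\ through the $x$-dependence of the \emph{input} itself, for which membership in $\mathcal{B}$ (sup-norm bounds only) provides no modulus of continuity. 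Taking, say, $U_{\pm}(\nu;t,x)=3C_{\varphi}\sin(Nx)$ with $N$ large shows $\mathcal{A}(\mathcal{B})$ is not equicontinuous in $x$ even on compact subsets of $\Gamma_T$, so the Arzel\`a--Ascoli/diagonal argument does not go through as stated. It could in principle be patched by building a fixed, $\mathcal{A}$-invariant modulus of continuity into $\mathcal{B}$, but you would have to exhibit one and verify invariance.

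Moreover, the premise that forces you into compactness is mistaken. The obstruction you describe (needing $\varphi_{\pm}''$ and $h'''$) arises only when comparing the derivative iterates $U_{\pm(n)}$ across different $n$, where the arguments of $\varphi_{\pm}'$ move with $n$. In the limiting variational system those arguments are fixed, so the difference of the right-hand sides at two candidate solutions never evaluates $\varphi_{\pm}'$ or $h''$ at two different points; it is controlled purely by $\|U-U'\|$ and $\|V-V'\|$ multiplied by small factors of order $C_{\varphi}T$ and $C_hT^2$. This is exactly what the paper exploits: it first inverts the small linear-in-$V_{\pm}$ part of (\ref{der-eq-2}) to obtain a Lipschitz map $(U_+,U_-)\mapsto(V_+,V_-)$ with constant as in (\ref{map-Lipschitz}), then shows the reduced operator $\mathcal{T}$ on the ball $\|U_+\|+\|U_-\|\le 15C_{\varphi}$ is a contraction under $C_{\varphi}T\le\frac{1}{15}$ and $C_hT\le C_{\varphi}$, so existence and uniqueness follow with no compactness needed (the paper cites Schauder for existence and Kellogg for uniqueness, but the contraction already does the work). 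If you replace your compactness step by this elimination-plus-contraction, the rest of your argument, including your bounds, which are even sharper than (\ref{bound-superior}), goes through.
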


\begin{proof}
First, we prove existence of a unique solution $(U_{\pm},V_{\pm}) \in C^{0,0,0}(\Gamma_T)$
to the integral equations (\ref{der-eq-1})--(\ref{der-eq-2}) under the conditions of Lemma \ref{lemma-local}.
Since solutions for $(Z_{\pm},Y_{\pm}) \in C^{0,0,0}(\Gamma_T)$ are already obtained in Lemma \ref{lemma-local},
the coefficients of the integral equation (\ref{der-eq-1}) and the arguments
of the unknown functions $U_{\mp}$ in (\ref{der-eq-2}) are all continuous functions in $\Gamma_T$.

The first equation (\ref{der-eq-1}) represents a linear relation between $U_{\pm}$ and $V_{\pm}$.
The second equation (\ref{der-eq-2}) is linear with respect to $(V_+,V_-)$ and quadratic with respect to
$(U_+,U_-)$. Therefore, first we solve (\ref{der-eq-2}) to obtain a unique map from
$(U_+,U_-)$ to $(V_+,V_-)$, then we substitute the map to (\ref{der-eq-1})
and solve the system uniquely in $(U_+,U_-)$ by using the Schauder fixed-point theorem.

Let us define a ball in $C^{0,0,0}(\Gamma_T)$ of a finite radius given by
\begin{equation}
\label{ball}
\| U_+ \| + \| U_- \| \leq 15 C_{\varphi} =: \delta.
\end{equation}
The integral equation (\ref{der-eq-2}) is rewritten in the explicit form
\begin{equation}
\label{second-equation}
V_{\pm}(s;t,x) + \frac{1}{4} U_{\mp}(\cdot) \int_s^t V_{\pm}(\nu;t,x) d \nu = F_{\pm} :=
U_{\mp}(\cdot) \left( 1 - \frac{3}{4} \int_s^t U_{\pm}(\nu;t,x) d \nu \right)
\end{equation}
where $U_{\mp}(\cdot)$ refers to
\begin{equation}
\label{arguments-U}
U_{\mp}\left(s;s,x - \frac{1}{4} \int_s^t \left[ 3 Z_{\pm}(\nu;t,x) + Y_{\pm}(\nu;t,x) \right] d \nu \right).
\end{equation}
For every $(U_+,U_-)$ in the ball given by (\ref{ball}), we have
\begin{equation}
\label{estimate-V-U-new}
\left\| \frac{1}{4} U_{\mp}(\cdot) \int_s^t V_{\pm}(\nu;t,x) d \nu \right\| \leq \frac{1}{4} T \| U_{\mp} \| \| V_{\pm} \| \leq
\frac{1}{4} \| V_{\pm} \|,
\end{equation}
where we have used the constraint $C_{\varphi} T \leq \frac{1}{15}$.
Therefore, the second term in (\ref{second-equation}) is strictly smaller than the first term in (\ref{second-equation}).
Inverting the linear operator on $V_{\pm}$ in $C^{0,0,0}(\Gamma_T)$ implies that for every $U_{\pm}$ in the ball given by (\ref{ball}),
there exists a unique solution $V_{\pm} \in C^{0,0,0}(\Gamma_T)$ of equation (\ref{second-equation}) such that
\begin{equation}
\label{ball-on-V}
\| V_{\pm} \| \leq \frac{4}{3} \| F_{\pm} \| \leq \frac{4}{3} \left( 1 + \frac{3}{4} T \| U_{\pm} \| \right) \| U_{\mp} \| \leq
\frac{7}{3} \| U_{\mp} \| \leq 3 \| U_{\mp} \|.
\end{equation}
This unique solution defines a map from $(U_+,U_-) \in C^{0,0,0}(\Gamma_T)$ to $(V_+,V_-) \in C^{0,0,0}(\Gamma_T)$.
Since the integral equation (\ref{second-equation}) is a quadratic polynomial on $(U_+,U_-) \in C^{0,0,0}(\Gamma_T)$,
the map $C^{0,0,0}(\Gamma_T) \ni (U_+,U_-) \mapsto (V_+,V_-) \in C^{0,0,0}(\Gamma_T)$ is $C^{\infty}$ in the ball (\ref{ball}).

Let us estimate the Lipschitz constant for the map $C^{0,0,0}(\Gamma_T) \ni (U_+,U_-) \mapsto (V_+,V_-) \in C^{0,0,0}(\Gamma_T)$.
Denote the values $(V_+',V_-')$ that correspond to the values $(U_+',U_-')$. Note that the arguments of $(U_+',U_-')$
are the same as those of $(U_+,U_-)$ given by (\ref{arguments-U}). Subtracting (\ref{second-equation}) for $(U_+,U_-)$
and $(U_+',U_-')$, we obtain
\begin{eqnarray*}
& V_{\pm} - V_{\pm}' + \frac{1}{4} (U_{\mp} - U_{\mp}') \int_s^t V_{\pm} d \nu +
+ \frac{1}{4} U_{\mp}' \int_s^t (V_{\pm} - V_{\pm}') d \nu\\
& = (U_{\mp} - U_{\pm}') \left( 1 - \frac{3}{4} \int_s^t U_{\pm} d \nu \right)
- \frac{3}{4} U_{\pm}' \int_s^t (U_{\pm} - U_{\pm}') d \nu.
\end{eqnarray*}
Using estimates similar to (\ref{estimate-V-U-new}) and (\ref{ball-on-V}), we obtain
\begin{eqnarray}
\nonumber
\| V_{\pm} - V_{\pm}' \| & \leq &  \frac{4}{3} \left( 1 + \frac{3}{4} T \| U_{\pm} \| + \frac{1}{4} T \| V_{\pm} \| \right) \| U_{\mp} - U_{\mp}' \|
+ T \| U_{\mp}' \| \| U_{\pm} - U_{\pm}' \| \\
& \leq &  \frac{7}{3} \| U_{\mp} - U_{\mp}' \| + \| U_{\pm} - U_{\pm}' \| \leq 3 \| U_{\mp} - U_{\mp}' \| + \| U_{\pm} - U_{\pm}' \|.
\label{map-Lipschitz}
\end{eqnarray}

Next, we substitute the map $C^{0,0,0}(\Gamma_T) \ni (U_+,U_-) \mapsto (V_+,V_-) \in C^{0,0,0}(\Gamma_T)$
to the integral equation (\ref{der-eq-1}) and rewrite it in the explicit form:
\begin{eqnarray}
\nonumber
& U_{\pm}(s;t,x) + \frac{1}{4} \varphi_{\pm}'(\cdot)
\int_0^t (3 U_{\pm}(\nu;t,x) + V_{\pm}(\nu;t,x)) d\nu \\
& \phantom{text} + \frac{1}{4} \int_0^s h''\left(\cdot\right)  \int_{\nu}^t \left[ 3 U_{\pm}(\tau;t,x) + V_{\pm}(\tau;t,x) \right] d\tau  d\nu =  G_{\pm} := \varphi'_{\pm}(\cdot) + \int_0^s h''\left(\cdot\right) d\nu,
\label{first-equation}
\end{eqnarray}
where the arguments for $\varphi_{\pm}'$ and $h''$ are uniquely defined
continuous functions in $\Gamma_T$. Since the mapping $C^{0,0,0}(\Gamma_T) \ni (U_+,U_-) \mapsto (V_+,V_-) \in C^{0,0,0}(\Gamma_T)$
is nonlinear, we solve the system of two integral equations (\ref{first-equation}) by using the Schauder fixed-point theorem
in the ball (\ref{ball}). By using bounds (\ref{ball-on-V}) and the constraint $C_h T \leq C_{\varphi}$,
we estimate the integral terms in the left-hand-side of system (\ref{first-equation}) as follows:
$$
\left\| \frac{1}{4} \varphi_{\pm}'(\cdot)
\int_0^t (3 U_{\pm}(\nu;t,x) + V_{\pm}(\nu;t,x)) d\nu \right\| \leq
\frac{1}{4} T C_{\varphi} (3 \| U_{\pm} \| + \| V_{\pm} \|) \leq
\frac{1}{20} \left( \| U_+ \| + \| U_- \| \right)
$$
and
$$
\left\| \frac{1}{4} \int_0^s h''\left(\cdot\right)  \int_{\nu}^t \left[ 3 U_{\pm}(\tau;t,x) + V_{\pm}(\tau;t,x) \right] d\tau  d\nu  \right\| \leq
\frac{1}{4} T^2 C_h (3 \| U_{\pm} \| + \| V_{\pm} \|) \leq
\frac{1}{20} \left( \| U_+ \| + \| U_- \| \right),
$$
where we have used the constraint $T C_{\varphi} \leq \frac{1}{15}$. The integral terms in system (\ref{first-equation})
are strictly smaller than the identity terms in the ball (\ref{ball}). Therefore,
writing the fixed-point problem in the form
\begin{equation}
\label{family-problems}
\left[ \begin{array}{c} U_+ \\ U_- \end{array} \right] = \left[ \begin{array}{c} G_+ \\ G_- \end{array} \right] +
\mathcal{T} \left[ \begin{array}{c} U_+ \\ U_- \end{array} \right]
\end{equation}
shows that the nonlinear integral operator $\mathcal{T}$ maps the ball (\ref{ball}) to its smaller subset.
The inhomogeneous terms $G_{\pm}$ given by (\ref{first-equation}) are bounded by $\| G_{\pm} \| \leq 2 C_{\varphi}$.
By the Schauder fixed-point theorem, there exists a solution $(U_+,U_-) \in C^{0,0,0}(\Gamma_T)$
to the fixed-point problem (\ref{family-problems}) in the ball (\ref{ball}). The solution to
the system of integral equations (\ref{first-equation}) satisfies the bound
$$
\| U_+ \| + \| U_- \| \leq \frac{4 C_{\varphi}}{1 - 3 TC_{\varphi}/2} \leq \frac{40}{9} C_{\varphi} < \delta
$$
and hence belongs to the ball (\ref{ball}). The solution is unique if the operator $\mathcal{T}$ is a contraction
in the ball (\ref{ball}) \cite{Kellogg}. This is proved directly by using the Lipschitz continuity of the
map $C^{0,0,0}(\Gamma_T) \ni (U_+,U_-) \mapsto (V_+,V_-) \in C^{0,0,0}(\Gamma_T)$ with the Lipschitz
constant given by (\ref{map-Lipschitz}). Indeed, we have
\begin{eqnarray*}
\left\| \frac{1}{4} \varphi_{\pm}'
\int_0^t [3 (U_{\pm} - U_{\pm}') + (V_{\pm} - V_{\pm}') ] d\nu \right\| & \leq &
 \frac{1}{4} T C_{\varphi} (3 \| U_{\pm} - U_{\pm}' \| + \| V_{\pm} - V_{\pm}' \|)\\
 & \leq & \frac{1}{15} \| U_{\pm} - U_{\pm}' \| + \frac{1}{20} \| U_{\mp} - U_{\mp}' \|
\end{eqnarray*}
and a similar estimate for the second term in $T$. Therefore, the operator $\mathcal{T}$ is a contraction
in the ball (\ref{ball}) so that the solution $(U_+,U_-) \in C^{0,0,0}(\Gamma_T)$ is unique.

For every $(s,t,x_0) \in \Gamma_T$, we repeat the estimates for the quotients
$$
\frac{Z_{\pm}(s;t,x) - Z_{\pm}(s;t,x_0)}{x-x_0} \quad \mbox{\rm and} \quad
\frac{Y_{\pm}(s;t,x) - Y_{\pm}(s;t,x_0)}{x-x_0}
$$
and prove that they remain bounded as $x \to x_0$ for every $x_0 \in \mathbb{R}^+$.
Furthermore, by repeating the estimates for bounded functions
$$
E_{\pm}(s;t,x,x_0) := \frac{Z_{\pm}(s;t,x) - Z_{\pm}(s;t,x_0)}{x-x_0} - U_{\pm}(s;t,x_0)
$$
and
$$
F_{\pm}(s;t,x,x_0) := \frac{Y_{\pm}(s;t,x) - Y_{\pm}(s;t,x_0)}{x-x_0} - V_{\pm}(s;t,x_0)
$$
and using uniqueness of solutions of the integral equations (\ref{2.11})--(\ref{2.14})
and their first variations (\ref{der-eq-1})--(\ref{der-eq-2}), we obtain for every $(s,t,x_0) \in \Gamma_T$ that
$$
\lim_{x \to x_0} E_{\pm}(s;t,x,x_0) = 0 \quad \mbox{\rm and} \quad \lim_{x \to x_0} F_{\pm}(s;t,x,x_0) = 0.
$$
Therefore, $(Z_{\pm}, Y_{\pm})$ are continuously differentiable with respect to $x$ at every $x_0 \in \mathbb{R}^+$
and the correspondence (\ref{introduction-U-V})
is established. Bounds (\ref{bound-superior})
follow from bounds (\ref{ball}) and (\ref{ball-on-V}).
\end{proof}

\begin{remark}\label{remark-invertibility}
Bounds (\ref{bound-superior}) are bigger than the $n$-independent bounds (\ref{bound-on-derivatives-n}).
Nevertheless, the bigger bounds (\ref{bound-superior}) are still sufficient for invertibility of the characteristic
coordinates $\xi_{\pm}(s;t,x)$ with respect to $x$ for every $(s,t,x) \in \Gamma_T$. Indeed,
bounds (\ref{bound-superior}) imply that
$$
\left\| \frac{1}{4} \int_s^t \left[ 3 U_{\pm}(\nu;t,x) + V_{\pm}(\nu;t,x) \right] d \nu \right\|
\leq \frac{1}{4} T (3 \| U_{\pm} \| + \| V_{\pm} \|) \leq \frac{3}{4} T (\| U_+ \| + \| U_- \|) \leq\frac{3}{4},
$$
where the constraint $C_{\varphi} T \leq \frac{1}{15}$ has been used.
Therefore, it follows from (\ref{coordinate-1-2-der-sol-rel}) that
if $(U_{\pm},V_{\pm})$ are $x$-derivatives of the local solution $(Z_{\pm},Y_{\pm})$
in Lemmas \ref{lemma-local} and \ref{lemma-implicit}, then
$\xi_{\pm}(s;t,x) > 0$ for every $(s,t,x) \in \Gamma_T$.
\end{remark}

\begin{lemma}
Under conditions of Lemma \ref{lemma-local}, the unique local solution to
the system of integral equations (\ref{2.11})--(\ref{2.14}) belongs to the class
$(Z_{\pm}, Y_{\pm}) \in C^{1,1,1}({\Gamma}_T)$.
\label{lemma-local-improved}
\end{lemma}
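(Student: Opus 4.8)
The plan is to upgrade the regularity of the local solution $(Z_{\pm},Y_{\pm})$ produced in Lemmas \ref{lemma-local} and \ref{lemma-implicit} from $C^{0,0,1}(\Gamma_T)$ to $C^{1,1,1}(\Gamma_T)$ by differentiating the integral equations (\ref{2.11}) and (\ref{2.14}) with respect to $s$ and $t$, solving the resulting linear integral equations for the derivatives uniquely under the same constraint (\ref{constraint-time}) on $T$, and then identifying these solutions with the genuine partial derivatives by a difference-quotient argument of the type already used in Step~2 of the proof of Lemma \ref{lemma-local} and in the proof of Lemma \ref{lemma-implicit}. The $x$-derivatives $\partial_x Z_{\pm}$ and $\partial_x Y_{\pm}$ are already in hand, so only the $s$- and $t$-derivatives remain to be constructed. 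The $s$-derivative of $Z_{\pm}$ is immediate: since $s$ enters (\ref{2.11}) only as the upper limit of the last integral, whose integrand does not depend on $s$, one has $\partial_s Z_{\pm}(s;t,x)=h'(\eta_{\pm}(s;t,x))$, which is continuous on $\Gamma_T$ and bounded by $C_h$.

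For the $t$-derivatives I would differentiate (\ref{2.11}) and (\ref{2.14}) formally in $t$, using
$$
\partial_t\eta_{\pm}(s;t,x)=-\frac14\Bigl(3Z_{\pm}(t;t,x)+Y_{\pm}(t;t,x)+\int_s^t\bigl[3\,\partial_t Z_{\pm}(\nu;t,x)+\partial_t Y_{\pm}(\nu;t,x)\bigr]\,d\nu\Bigr)
$$
and the identity $Y_{\pm}(t;t,x)=Z_{\mp}(t;t,x)$. This produces a closed linear system of integral equations for the pair $(\partial_t Z_{\pm},\partial_t Y_{\pm})$ — closed because only the same-sign derivatives appear on the right-hand side, just as the systems treated in the proof of Proposition \ref{prop-correspondence} and in the proof of Lemma \ref{lemma-implicit} decouple in the $\pm$ index. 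All coefficients of this system are continuous on $\Gamma_T$ and are built from $\varphi_{\pm}'$, $h''$, $Z_{\pm}$, $Y_{\pm}$, $\partial_x Z_{\pm}$, whose norms are controlled by $C_{\varphi}$, $C_h$ and the bounds (\ref{bound-on-approximations-superior}) and (\ref{bound-superior}); the inhomogeneous terms involve $\varphi_{\pm}'$, $h''$, $Z_{\pm}(t;t,x)$, $Y_{\pm}(t;t,x)$ and are bounded. Writing the system in the form $(I+P)$ applied to the vector of unknowns equal to the data, as in the proof of Lemma \ref{lemma-implicit}, each term of the perturbation $P$ carries a factor $T$ or $T^2$ coming from an integration in $\nu$ or $\tau$, times a product of the controlled norms; in particular the term coming from the differentiated version of (\ref{2.14}) contributes a factor $\frac14 T\,\|\partial_x Z_{\mp}\|\le\frac{15}{4}C_{\varphi}T$. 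Collecting the contributions exactly as in the estimate of $W_{\pm}$ in the proof of Proposition \ref{prop-correspondence}, but now with the explicit bound $\|\partial_x Z_{\mp}\|\le15C_{\varphi}$, one finds $\|P\|<1$ as soon as $C_{\varphi}T\le\frac1{15}$ and $C_h T\le C_{\varphi}$, which is exactly (\ref{constraint-time}). Hence $I+P$ is invertible, the $t$-derivative system has a unique solution in $C^{0,0,0}(\Gamma_T)$ with explicit bounds, and this solution is continuous because all of the data are.

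It then remains to show that the solution of the linear system really is $(\partial_t Z_{\pm},\partial_t Y_{\pm})$. For this I would, as before, first show that the difference quotients of $Z_{\pm}$ and $Y_{\pm}$ in $t$ at a fixed point stay bounded as $t\to t_0$, and then, subtracting the candidate derivatives and invoking uniqueness of solutions of (\ref{2.11})--(\ref{2.14}) together with uniqueness for the linear $t$-derivative system, conclude that these quotients converge to the candidate derivatives; thus $\partial_t Z_{\pm}$ and $\partial_t Y_{\pm}$ exist, are continuous and bounded on $\Gamma_T$. Once $\partial_t Z_{\mp}$ is available, differentiating (\ref{2.14}) in $s$ by the chain rule gives
$$
\partial_s Y_{\pm}(s;t,x)=\partial_s Z_{\mp}+\partial_t Z_{\mp}+\frac14\bigl(3Z_{\pm}(s;t,x)+Y_{\pm}(s;t,x)\bigr)\,\partial_x Z_{\mp},
$$
where $\partial_s Z_{\mp}$, $\partial_t Z_{\mp}$, $\partial_x Z_{\mp}$ are all evaluated at the point $(s;s,\eta_{\pm}(s;t,x))$; this expression is bounded and continuous on $\Gamma_T$. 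Collecting the $s$-, $t$- and $x$-derivatives yields $(Z_{\pm},Y_{\pm})\in C^{1,1,1}(\Gamma_T)$. The main obstacle is the bookkeeping in the $t$-derivative system — correctly collecting all the terms generated by differentiating (\ref{2.11}) and (\ref{2.14}) in $t$ and verifying $\|P\|<1$ under (\ref{constraint-time}); the difference-quotient step, although lengthy, is a direct repetition of arguments already carried out twice in the paper and is expected to cause no new difficulty.
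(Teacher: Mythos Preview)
Your proposal is correct and follows essentially the same route as the paper: first obtain $\partial_s Z_{\pm}=h'(\eta_{\pm})$ directly, then differentiate (\ref{2.11})--(\ref{2.14}) in $t$ to get a linear integral system for $(\partial_t Z_{\pm},\partial_t Y_{\pm})$ whose perturbation operator is a contraction under (\ref{constraint-time}) (using $\|\partial_x Z_{\mp}\|\le 15C_{\varphi}$ from Lemma~\ref{lemma-implicit}), and finally read off $\partial_s Y_{\pm}$ from the chain rule applied to (\ref{2.14}). The only notable addition you make is the explicit difference-quotient identification of the linear-system solution with the genuine $t$-derivative; the paper omits this step and simply asserts the computed derivatives, so your version is in fact slightly more careful.
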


\begin{proof}
By Lemmas \ref{lemma-local} and \ref{lemma-implicit},
there exists a unique solution $(Z_{\pm}, Y_{\pm}) \in C^{0,0,1}({\Gamma}_T)$
to the system of integral equations (\ref{2.11})--(\ref{2.14}). We show
that the solution actually belongs to $C^{1,1,1}(\Gamma_T)$.

Let us compute the derivatives of the system of integral equations (\ref{2.11})--(\ref{2.14}) in $t$:
\begin{eqnarray}
\partial_t Z_{\pm}(s;t,x) & = & -\frac{1}{4} \varphi'_{\pm}(\cdot) \left( 3 Z_{\pm}(t;t,x) + Y_{\pm}(t;t,x) \right)
-\frac{1}{4} \int_0^s h''(\cdot) d \nu \left( 3 Z_{\pm}(t;t,x) + Y_{\pm}(t;t,x) \right) \nonumber \\
& \phantom{t} & -\frac{1}{4} \varphi'_{\pm}(\cdot) \int_0^t (3 \partial_t Z_{\pm}(\nu;t,x) + \partial_t Y_{\pm}(\nu;t,x)) d\nu \nonumber \\
& \phantom{t} & -\frac{1}{4} \int_0^s h''(\cdot) \left( \int_{\nu }^t (3 \partial_t Z_{\pm}(\tau;t,x) + \partial_t Y_{\pm}(\tau;t,x)) d\tau \right) d\nu
\label{time-der-1}
\end{eqnarray}
and
\begin{eqnarray}
\label{time-der-2}
\partial_t Y_{\pm}(s;t,x) = -\frac{1}{4} \partial_x Z_{\mp}(\cdot)
\int_0^t (3 \partial_t Z_{\pm}(\nu;t,x) + \partial_t Y_{\pm}(\nu;t,x)) d \nu,
\end{eqnarray}
where the arguments of $\varphi_{\pm}'$, $h''$, and $\partial_x Z_{\pm}$ are the same as in the system (\ref{2.11})--(\ref{2.14}).
They are given continuous functions of their arguments in the linear integral equations (\ref{time-der-1})--(\ref{time-der-2}).

Using similar estimates as in Step 2 in the proof of Lemma \ref{lemma-local}, we can use invertibility of
the linear integral operators
and prove existence and uniqueness of solutions to the system (\ref{time-der-1})--(\ref{time-der-2})
for $(\partial_t Z_{\pm},\partial_t Y_{\pm})$ in $C^{0,0,0}(\Gamma_T)$. Moreover,
the $t$-derivatives of $(Z_{\pm},Y_{\pm})$ satisfy the following bounds:
$$
\| \partial_t Z_{\pm} \| \leq \frac{1}{4} \left(C_{\varphi} T + \frac{1}{2} C_h T^2 \right)
\left( 3 \| \partial_t Z_{\pm} \|  + \| \partial_t Y_{\pm} \| \right) + \frac{1}{4} (C_{\varphi} + C_h T)
\left( 3 \| Z_{\pm} \|  + \| Y_{\pm} \| \right)
$$
and
$$
\| \partial_t Y_{\pm} \| \leq \frac{1}{4} \| \partial_x Z_{\mp} \| T
\left( 3 \| \partial_t Z_{\pm} \|  + \| \partial_t Y_{\pm} \| \right).
$$
By using bounds (\ref{constraint-time}), (\ref{bound-on-approximations-superior}), and (\ref{bound-superior}),
we confirm that $\| \partial_t Z_{\pm} \|$ and $\| \partial_t Y_{\pm} \|$ are bounded in $\Gamma_T$.
Therefore, the solution $(Z_{\pm},Y_{\pm})$ to the system of integral equations (\ref{2.11})--(\ref{2.14})
belongs to $C^{0,1,1}(\Gamma_T)$.

Finally, we compute the derivatives of the system of integral equations (\ref{2.11})--(\ref{2.14}) in $s$:
\begin{eqnarray}
\partial_s Z_{\pm}(s;t,x) & = & h'\left(x - \frac{1}{4} \int_{s}^t (3 Z_{\pm}(\nu;t,x) + Y_{\pm}(\nu;t,x)) d \nu \right)
\label{time-der-3}
\end{eqnarray}
and
\begin{eqnarray}
\label{time-der-4}
\partial_s Y_{\pm}(s;t,x) = \partial_s Z_{\mp}(\cdot) + \partial_t Z_{\mp}(\cdot) +
\frac{1}{4} \partial_x Z_{\mp}(\cdot) (3 Z_{\pm}(s;t,x) + Y_{\pm}(s;t,x)).
\end{eqnarray}
From (\ref{time-der-3}), we confirm that $\| \partial_s Z_{\pm} \|$ is bounded in $\Gamma_T$. Then, from
(\ref{time-der-4}) and the bounds on $Z_{\pm} \in C^{1,1,1}(\Gamma_T)$, we confirm that
$\| \partial_s Y_{\pm} \|$ is also bounded in $\Gamma_T$.
Therefore, the solution $(Z_{\pm},Y_{\pm})$ to the system of integral equations (\ref{2.11})--(\ref{2.14})
belongs to $C^{1,1,1}(\Gamma_T)$.
\end{proof}

The proof of Theorem \ref{theorem-main-local} follows from the results of Lemmas \ref{lemma-local}, \ref{lemma-implicit},
and \ref{lemma-local-improved}, as well as the correspondence result of Proposition \ref{prop-correspondence}.
Solutions to the shallow-water system (\ref{shallow-water}) are related to the solutions to the system (\ref{sym-system})
by using the transformation (\ref{RH-inverse}).

\section{Global solution to system (\ref{2.11})--(\ref{2.14})}

It follows from the correspondence $z_{\pm}(t,x) = Z_{\pm}(t;t,x)$ for $(t,x) \in
\Omega_T$ and the bounds (\ref{bound-on-approximations-superior}) and (\ref{bound-superior}) that
the local solution to the system (\ref{sym-system}) at time $t = T$
satisfies the estimates
\begin{equation}
\label{bound-first-iteration}
\| z_{\pm}(T,\cdot) \|_{C^1_b} \leq 15 C_{\varphi}.
\end{equation}
If we attempt to continue this local solution beyond the time $t = T$
by a recurrent use of Lemmas \ref{lemma-local}, \ref{lemma-implicit}, and \ref{lemma-local-improved},
then we will run into the following obstacle.

Let us denote the solution to the system of integral equations (\ref{2.11})--(\ref{2.14})
given by  Lemmas \ref{lemma-local}, \ref{lemma-implicit}, and \ref{lemma-local-improved} extended from time
$T_{m-1}$ to $T_m$ by $(Z_{\pm}^{(m)},Y_{\pm}^{(m)})$ for $m \in \mathbb{N}$, where $T_0 = 0$.
Then, iterating bound (\ref{bound-first-iteration})
with the bounds (\ref{bound-on-approximations-superior}) and (\ref{bound-superior}), we obtain
\begin{equation}
\label{bound-next-iterations}
\| z_{\pm}^{(m)}(T_m,\cdot) \|_{C^1_b} \leq 15^m C_{\varphi}, \quad m \in \mathbb{N}.
\end{equation}
Furthermore, using the constraint (\ref{constraint-time}) on the continuation time, we have
\begin{equation}
\label{bound-time-next-iterations}
T_m - T_{m-1} \leq \frac{1}{15^{m+1} C_{\varphi}}, \quad m \in \mathbb{N}.
\end{equation}
Since the series $\sum_{m \in \mathbb{N}} 15^{-m}$ converges, we have
$T_{\infty} := \lim_{m \to \infty} T_m < \infty$,
so that the continuation technique results in a local solution to the system (\ref{sym-system})
over a finite time span $[0,T_{\infty})$.

In order to be able to extend the local solution to the system of integral equations (\ref{2.11})--(\ref{2.14})
without restriction on time $T$, we shall find a sharper bounds on the growth
of the $x$-derivatives of the solution $(Z_{\pm},Y_{\pm})$. This is only possible
under additional conditions (\ref{cond3-on-h}) and (\ref{cond3}) on the function $h$ and initial data,
the latter conditions are rewritten in the form (\ref{cond-phi-intro-2}).
The key result is the following lemma.

\begin{lemma}
\label{lemma-bound-on-derivative}
In addition to the conditions of Lemma \ref{lemma-local}, assume that conditions (\ref{cond3-on-h}) and
(\ref{cond-phi-intro-2}) are satisfied.
Then, the unique solution $(Z_{\pm}, Y_{\pm}) \in C^{0,0,1}({\Gamma}_T)$
to the system of integral equations (\ref{2.11})--(\ref{2.14}) constructed in Lemmas \ref{lemma-local} and \ref{lemma-implicit}
satisfy the improved bounds
\begin{equation}
\label{bound-on-approximations-improved}
\| \partial_x Z_{\pm} \|, \| \partial_x Y_{\pm} \| \leq 2 C_{\varphi}.
\end{equation}
\end{lemma}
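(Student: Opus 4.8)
The plan is to combine the sign-definiteness of the $x$-derivatives from Proposition~\ref{proposition-derivative} with the identity~(\ref{coordinate-1-2-der-sol-rel}) for the Jacobian $\xi_{\pm}$. Recall that the crude bounds~(\ref{bound-superior}) of Lemma~\ref{lemma-implicit} were obtained by estimating the convective integral terms in~(\ref{der-eq-1})--(\ref{der-eq-2}) with the triangle inequality, which produces a geometric inflation factor $(1-cC_{\varphi}T)^{-1}$. Under the extra hypotheses~(\ref{cond3-on-h}) and~(\ref{cond-phi-intro-2}), those integral terms turn out to be \emph{subtractive} rather than additive, the inflation disappears, and what is left is exactly the ``trivial'' bound $2C_{\varphi}$, closed by the first part of the time constraint~(\ref{constraint-time}), namely $TC_h \leq C_{\varphi}$.

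First I would invoke Lemma~\ref{lemma-local-improved} to upgrade the solution to $(Z_{\pm},Y_{\pm}) \in C^{1,1,1}(\Gamma_T)$, so that Proposition~\ref{proposition-derivative} applies and gives $\xi_{\pm}(s;t,x) \leq 1$, $U_{\pm}(s;t,x) \geq 0$, and $V_{\pm}(s;t,x) \geq 0$ on $\Gamma_T$, where $U_{\pm} = \partial_x Z_{\pm}$ and $V_{\pm} = \partial_x Y_{\pm}$ by Lemma~\ref{lemma-implicit}; together with the strict positivity $\xi_{\pm} > 0$ furnished by Remark~\ref{remark-invertibility} (which is already valid at the $C^{0,0,1}$ level), this yields $0 < \xi_{\pm} \leq 1$ throughout $\Gamma_T$. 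Substituting~(\ref{coordinate-1-2-der-sol-rel}) into~(\ref{der-eq-1}) then rewrites that equation as
\[
U_{\pm}(s;t,x) = \varphi_{\pm}'(\cdot)\,\xi_{\pm}(0;t,x) + \int_0^s h''(\cdot)\,\xi_{\pm}(\nu;t,x)\,d\nu ,
\]
with the same arguments of $\varphi_{\pm}'$ and $h''$ as in~(\ref{2.11}). Since $0 \leq \varphi_{\pm}'(x) \leq C_{\varphi}$, $0 \leq h''(x) \leq C_h$, and $0 < \xi_{\pm} \leq 1$, this gives
\[
0 \leq U_{\pm}(s;t,x) \leq C_{\varphi} + s\,C_h \leq C_{\varphi} + T C_h \leq 2C_{\varphi} ,
\]
using $s \leq T$ and $T C_h \leq C_{\varphi}$. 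Finally, equation~(\ref{der-eq-2}) reads $V_{\pm}(s;t,x) = U_{\mp}(s;s,\cdot)\,\xi_{\pm}(s;t,x)$, so $0 \leq V_{\pm}(s;t,x) \leq \|U_{\mp}\| \leq 2C_{\varphi}$. Together these are precisely the bounds~(\ref{bound-on-approximations-improved}).

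I do not expect a genuine obstacle here: once~(\ref{der-eq-1}) is written in the $\xi_{\pm}$-form above, every step is a one-line estimate and no further fixed-point or Gr\"onwall argument is needed — the improved constant $2C_{\varphi}$ is just the value one reads off when the convective integral corrections are dropped. The only point that must be handled with care is the order of the implications ``$\xi_{\pm} > 0 \Rightarrow U_{\pm} \geq 0 \Rightarrow V_{\pm} \geq 0 \Rightarrow \xi_{\pm} \leq 1$'': strict positivity of $\xi_{\pm}$ must be secured first, independently of the monotonicity hypotheses, after which~(\ref{cond3-on-h}) and~(\ref{cond-phi-intro-2}) propagate the signs and deliver the upper bound $\xi_{\pm} \leq 1$ that drives the whole estimate.
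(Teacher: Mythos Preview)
Your proposal is correct and follows essentially the same route as the paper's own proof: invoke Proposition~\ref{proposition-derivative} together with Remark~\ref{remark-invertibility} to get $0<\xi_{\pm}\le 1$ and $U_{\pm},V_{\pm}\ge 0$, then read off $\|U_{\pm}\|\le C_{\varphi}+C_hT\le 2C_{\varphi}$ and $\|V_{\pm}\|\le\|U_{\mp}\|$ directly from~(\ref{der-eq-1})--(\ref{der-eq-2}) rewritten via~(\ref{coordinate-1-2-der-sol-rel}). Your explicit invocation of Lemma~\ref{lemma-local-improved} to secure the $C^{1,1,1}$ regularity required by Proposition~\ref{proposition-derivative} is a point of care the paper leaves implicit, but otherwise the arguments coincide.
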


\begin{proof}
The components $(U_{\pm},V_{\pm})$ satisfy the system of integral
equations (\ref{der-eq-1})-(\ref{der-eq-2}) with the correspondence (\ref{introduction-U-V}).
By Proposition \ref{proposition-derivative} and Remark \ref{remark-invertibility}, we have
$0 < \xi_{\pm}(s;t,x) \leq 1$, $U_{\pm}(s;t,x) \geq 0$, and $V_{\pm}(s;t,x) \geq 0$
for every $(s,t,x) \in \Gamma_T$, where $\xi_{\pm}$ are related to $U_{\pm}$ and $V_{\pm}$
by (\ref{coordinate-1-2-der-sol-rel}). Therefore,
the integral equations (\ref{der-eq-1})--(\ref{der-eq-2}) imply the bounds
$$
\| U_{\pm} \| \leq C_{\varphi} + C_h T \leq 2 C_{\varphi}, \quad \| V_{\pm} \| \leq \| U_{\mp} \| \leq 2 C_{\varphi},
$$
where we have used $C_h T \leq C_{\varphi}$ as in Lemma \ref{lemma-local}.
Due to the correspondence (\ref{introduction-U-V}), we have obtained the bounds
(\ref{bound-on-approximations-improved})
\end{proof}

The sharper bounds (\ref{bound-on-approximations-improved}) can be used to continue the local solution
$z_{\pm}(t,x) = Z_{\pm}(t;t,x)$ to the system (\ref{sym-system}) globally in time. The next lemma establish piecewise continuation
of solutions to the system of integral equations (\ref{2.11})--(\ref{2.14}) in $C^{1,1,1}(\Gamma_T)$ for larger values of $T$.

\begin{lemma}
Let $(Z_{\pm}^{(m)},Y_{\pm}^{(m)})$ for $m \in \mathbb{N}$ denote the sequence of solutions
to the system of integral equations (\ref{2.11})--(\ref{2.14}) on the interval $[T_{m-1},T_m]$
starting with initial data
$$
z_{\pm}(T_{m-1},x) = Z_{\pm}^{(m-1)}(T_{m-1};T_{m-1},x),
$$
where $T_0 = 0$ and $Z_{\pm}^{(0)}(0;0,x) = \varphi_{\pm}(x)$.
Assume $h \in C^2_b(\mathbb{R}^+)$ and $\varphi_{\pm} \in C^1_b(\mathbb{R}^+)$
satisfy the bounds (\ref{cond1}), (\ref{cond3-on-h}), (\ref{cond-phi-intro-1}),  and (\ref{cond-phi-intro-2}).
Define $C_{\varphi} := \max\{\| \varphi_+ \|_{C^1_b},\| \varphi_- \|_{C^1_b}\}$ and $C_h := \| h \|_{C^2_b}$.
Assume that $(Z_{\pm}^{(m)},Y_{\pm}^{(m)}) \in C^{1,1,1}(\Gamma_{T_m-T_{m-1}})$ for an $m \in \mathbb{N}$
satisfies the bounds
\begin{equation}
\label{bound-on-approximations-before}
\| Z_{\pm}^{(m)} \|, \| Y_{\pm}^{(m)} \|, \| \partial_x Z_{\pm}^{(m)} \|, \| \partial_x Y_{\pm}^{(m)} \| \leq (m+1) C_{\varphi}.
\end{equation}
Then, the system of integral equations (\ref{2.11})--(\ref{2.14}) admits a unique solution in class
$$
(Z_{\pm}^{(m+1)}, Y_{\pm}^{(m+1)}) \in C^{1,1,1}({\Gamma}_{T_{m+1}-T_m})
$$
satisfying the bounds
\begin{equation}
\label{bound-on-approximations-after}
\| Z_{\pm}^{(m+1)} \|, \| Y_{\pm}^{(m+1)} \|, \| \partial_x Z_{\pm}^{(m+1)} \|,  \| \partial_x Y_{\pm}^{(m+1)} \| \leq (m+2) C_{\varphi},
\end{equation}
while the time span $[T_m,T_{m+1}]$ is defined by
\begin{equation}
\label{constraint-time-after}
T_{m+1}-T_m :=  \min \left(\frac{C_\varphi}{C_h},\frac{1}{15 (m+1) C_\varphi}\right).
\end{equation}
\label{lemma-continuation}
\end{lemma}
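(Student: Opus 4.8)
The plan is to re-run the construction of Lemmas \ref{lemma-local}, \ref{lemma-implicit}, \ref{lemma-bound-on-derivative} and \ref{lemma-local-improved} on the time span $[T_m,T_{m+1}]$ with the amplitude constant $C_\varphi$ replaced by $\widetilde C_\varphi:=(m+1)C_\varphi$, and then to exploit the particular form (\ref{constraint-time-after}) of the time span to turn the naive geometric growth of the bounds into a linear one. Since the quasilinear system (\ref{sym-system}), and hence the integral system (\ref{2.11})--(\ref{2.14}), contains no explicit dependence on $t$, solving on $[T_m,T_{m+1}]$ with Cauchy data at $t=T_m$ is the same as solving on $[0,\tau_{m+1}]$, $\tau_{m+1}:=T_{m+1}-T_m$, with Cauchy data at $t=0$. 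The new Cauchy data are $\psi_\pm(x):=Z_\pm^{(m)}(T_m;T_m,x)$, with $\psi_\pm'(x)=\partial_x Z_\pm^{(m)}(T_m;T_m,x)=U_\pm^{(m)}(T_m;T_m,x)$ by the correspondence (\ref{introduction-U-V}) established in Lemma \ref{lemma-implicit}.

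First I would check that the new data inherit the structural hypotheses: by Proposition \ref{proposition-definition}, $\psi_\pm(x)\le 0$, and by Proposition \ref{proposition-derivative}, $\psi_\pm'(x)\ge 0$ for all $x\in\mathbb{R}^+$, so conditions (\ref{cond-phi-intro-1}) and (\ref{cond-phi-intro-2}) hold for $\psi_\pm$, while (\ref{cond1}) and (\ref{cond3-on-h}) on $h$ are unchanged. The assumed bounds (\ref{bound-on-approximations-before}) give $\sup_x|\psi_\pm|\le\widetilde C_\varphi$ and $\sup_x|\psi_\pm'|\le\widetilde C_\varphi$. The proofs of Lemmas \ref{lemma-local}, \ref{lemma-implicit}, \ref{lemma-bound-on-derivative} and \ref{lemma-local-improved} use $C_\varphi$ only as a common bound for $\sup|\varphi_\pm|$ and $\sup|\varphi_\pm'|$, so they carry over with $C_\varphi\mapsto\widetilde C_\varphi$ once the time span is admissible, i.e.\ $\tau_{m+1}\le\min(\widetilde C_\varphi/C_h,1/(15\widetilde C_\varphi))$. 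Definition (\ref{constraint-time-after}) reads $\tau_{m+1}=\min(C_\varphi/C_h,1/(15\widetilde C_\varphi))$ and $C_\varphi/C_h\le\widetilde C_\varphi/C_h$, so admissibility holds; the rescaled Lemmas \ref{lemma-local}, \ref{lemma-implicit} and \ref{lemma-local-improved} then produce a unique solution $(Z_\pm^{(m+1)},Y_\pm^{(m+1)})\in C^{1,1,1}(\Gamma_{\tau_{m+1}})$.

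The rescaled lemmas only give the crude bound $2\widetilde C_\varphi$, so the heart of the proof is to improve it to $(m+2)C_\varphi$ using the first component $C_\varphi/C_h$ of (\ref{constraint-time-after}), which forces $C_h\tau_{m+1}\le C_\varphi$. Since $h'\le 0$ and $\psi_\pm\le 0$, equation (\ref{2.11}) gives $-\widetilde C_\varphi-C_h\tau_{m+1}\le Z_\pm^{(m+1)}(s;t,x)\le 0$ (the upper bound from Proposition \ref{proposition-definition}), whence $\|Z_\pm^{(m+1)}\|\le\widetilde C_\varphi+C_\varphi=(m+2)C_\varphi$, and $\|Y_\pm^{(m+1)}\|=\|Z_\mp^{(m+1)}\|\le(m+2)C_\varphi$ by (\ref{2.14}). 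For the $x$-derivatives I would repeat the argument of Lemma \ref{lemma-bound-on-derivative} at amplitude $\widetilde C_\varphi$: by Proposition \ref{proposition-derivative} and Remark \ref{remark-invertibility} one has $0<\xi_\pm^{(m+1)}(s;t,x)\le 1$ and $U_\pm^{(m+1)},V_\pm^{(m+1)}\ge 0$, and since $\psi_\pm'\ge 0$ and $h''\ge 0$, equations (\ref{der-eq-1})--(\ref{der-eq-2}) together with (\ref{coordinate-1-2-der-sol-rel}) give $0\le U_\pm^{(m+1)}(s;t,x)\le\sup_x|\psi_\pm'|+C_h\tau_{m+1}\le\widetilde C_\varphi+C_\varphi=(m+2)C_\varphi$ and $0\le V_\pm^{(m+1)}(s;t,x)=U_\mp^{(m+1)}(\cdot)\,\xi_\pm^{(m+1)}(s;t,x)\le\|U_\mp^{(m+1)}\|\le(m+2)C_\varphi$. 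By (\ref{introduction-U-V}) this is precisely (\ref{bound-on-approximations-after}). Finally, Propositions \ref{proposition-definition} and \ref{proposition-derivative} apply again to $(Z_\pm^{(m+1)},Y_\pm^{(m+1)})$, so the sign conditions $Z_\pm^{(m+1)}\le 0$, $U_\pm^{(m+1)}\ge 0$, $V_\pm^{(m+1)}\ge 0$ persist and the induction can be advanced.

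The main obstacle is the bookkeeping just described. The time-span constraint (\ref{constraint-time-after}) must be read as two independent conditions: $C_h\tau_{m+1}\le C_\varphi$, which is what makes the sup-norms grow by the \emph{fixed} amount $C_\varphi$ per step instead of doubling, and $\widetilde C_\varphi\tau_{m+1}\le 1/15$, which is the amplitude at which the contraction and Schauder arguments of Lemmas \ref{lemma-local}, \ref{lemma-implicit} and \ref{lemma-local-improved} still close. In parallel, the monotonicity hypotheses (\ref{cond3-on-h}) and (\ref{cond-phi-intro-2}) must be brought in, through the sign-definiteness of Proposition \ref{proposition-derivative}, to keep the $x$-derivative bound from the geometric blow-up seen in (\ref{bound-next-iterations}) --- this is the one place where the additional conditions of Theorem \ref{theorem-main-global} are really needed. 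A secondary subtlety is that $\widetilde C_\varphi$ is used to bound $\sup|\psi_\pm|$ and $\sup|\psi_\pm'|$ individually rather than their sum, which is the precise way $C_\varphi$ enters the earlier proofs.
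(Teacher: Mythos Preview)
Your proposal is correct and follows essentially the same strategy as the paper: verify that the sign conditions (\ref{cond-phi-intro-1})--(\ref{cond-phi-intro-2}) persist for the new data $\psi_\pm$, re-run the local existence machinery on $[T_m,T_{m+1}]$ with the amplitude constant promoted to $\widetilde C_\varphi=(m+1)C_\varphi$, and then sharpen the resulting bounds to $(m+2)C_\varphi$ via the $C_h\tau_{m+1}\le C_\varphi$ half of (\ref{constraint-time-after}) together with the $0<\xi_\pm\le 1$ estimate from Proposition~\ref{proposition-derivative}. The only difference is presentational: you invoke Lemmas~\ref{lemma-local}--\ref{lemma-local-improved} as black boxes under the rescaling $C_\varphi\mapsto\widetilde C_\varphi$, whereas the paper re-derives the intermediate estimates (contraction constant $K_\pm(T)$, matrix bound for $P$, etc.) with the new constants written out explicitly, arriving at the same endpoint.
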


\begin{proof}
The first step of the induction method with bound (\ref{bound-on-approximations-before}) and
the time constraint (\ref{constraint-time-after}) is justified by Lemmas \ref{lemma-local}, \ref{lemma-implicit},
\ref{lemma-local-improved}, and \ref{lemma-bound-on-derivative}.

By Proposition \ref{proposition-definition}, the system of integral equations  (\ref{2.11})--(\ref{2.14})  remains closed in
${\Gamma}_{T_m-T_{m-1}}$, so that $z_{\pm}(T_{m},x) \leq 0$ and $\partial_x z_{\pm}(T_m,x) \geq 0$ remain true
for every $x \in \mathbb{R}^+$. Then, the system of integral equations (\ref{2.11})--(\ref{2.14})
remains closed in $\Gamma_{T_{m+1}-T_m}$ as long as the solution exists. Let us denote $T := T_{m+1}-T_m$.

We review bounds used in the proof of Lemma \ref{lemma-local}. Since the superscript now refer
to the $(m+1)$-th local solution defined on the interval $[T_{m},T_{m+1}]$, we only look at the convergence
of iterations defined by the system of implicit integral equations (\ref{2.17})--(\ref{2.20}).
It follows from these integral equations that bounds (\ref{bound-on-approximations})
for the successive approximations $\{ Z_{\pm (n)}^{(m+1)}, Y_{\pm (n)}^{(m+1)} \}_{n \in \mathbb{N}}$
become
\begin{equation}
\label{bound-on-approximations-after-proof} \left\{ \begin{array}{l}
\| Z^{(m+1)}_{\pm (n)}\| \leq (m+1) C_{\varphi} + C_h T \leq (m+2) C_{\varphi}, \\
\| Y^{(m+1)}_{\pm (n)}\| = \| Z^{(m+1)}_{\pm (n-1)}\| \leq (m+2) C_{\varphi}, \end{array} \right.
\quad n \in \mathbb{N},
\end{equation}
where we have used $C_h T \leq C_{\varphi}$ according to the constraint (\ref{constraint-time-after}).
If convergence of successive approximations $\{ Z_{\pm (n)}^{(m+1)}, Y_{\pm (n)}^{(m+1)}  \}_{n \in \mathbb{N}}$
as $n \to \infty$ is proved, then bounds (\ref{bound-on-approximations-after-proof})
yield the first bounds in (\ref{bound-on-approximations-after}).
To prove the convergence, we first assume as in Step 1 that
\begin{equation}
\label{bound-on-derivatives-proof}
\| \partial_x Z^{(m+1)}_{\pm (n-1)} \| \le (2 m+3) C_\varphi, \quad
\| \partial_x Y^{(m+1)}_{\pm (n-1)} \| \le (3 m+4) C_\varphi, \quad n \in \mathbb{N},
\end{equation}
which is true for $n = 1$.
From the definition (\ref{def-K}), bounds (\ref{bound-on-approximations-before}), (\ref{bound-on-derivatives-proof}),
and $C_h T \leq C_{\varphi}$, convergence of
successive approximations at the second level of Picard iterations
in $C^{0,0,0}(\Gamma_{T_{m+1}-T_m})$ (Step 1) is guaranteed if
\begin{equation}
4 K_{\pm}(T) \leq 3 (m+1) C_{\varphi} T + \frac{3}{2} C_h T^2 + (2 m+3) C_{\varphi} T
\leq \frac{5 (2m+3)}{2} C_{\varphi} T \leq \frac{2m +3}{6 (m+1)} < 1,
\end{equation}
where we have used $C_{\varphi} T \leq \frac{1}{15 (m+1)}$ as in the constraint (\ref{constraint-time-after}).
Thus, successive approximations at the second level of Picard iterations
converge in $C^{0,0,0}(\Gamma_{T})$ to
the solution $\{ Z_{\pm (n)}^{(m+1)}, Y_{\pm (n)}^{(m+1)} \}_{n \in \mathbb{N}}$ for every $n \in \mathbb{N}$.

We hence check that $\{ Z_{\pm (n)}^{(m+1)}, Y_{\pm (n)}^{(m+1)} \}_{n \in \mathbb{N}}$ belongs to $C^{0,0,1}(\Gamma_T)$
(Step 2). Let us now rewrite bounds (\ref{matrix-P}) in order to check consistency with the bounds (\ref{bound-on-derivatives-proof}).
We obtain
\begin{equation}
\label{matrix-P-further}
\left\| P \left[ \begin{array}{c} U_{\pm (n)} \\V_{\pm (n)}\end{array} \right] \right\|
\leq \frac{1}{4} T C_{\varphi} \left[ \begin{array}{cc} 3(m+2) & (m+2) \\ 3(2m+3) & (2m+3) \end{array} \right]
\left[ \begin{array}{c} \| U_{\pm (n)} \| \\ \| V_{\pm (n)} \| \end{array} \right],
\end{equation}
where we have used $C_h T \leq C_{\varphi}$ and $\|  \partial_x Z^{(m+1)}_{\pm (n-1)} \| \le (2 m+3) C_\varphi$.
Since the upper bound in (\ref{matrix-P-further}) has the norm being strictly smaller than one, under
the constraint (\ref{constraint-time-after}) on the time step $T$, we establish existence and uniqueness
of partial derivatives of  $(Z_{\pm (n)}^{(m+1)}, Y_{\pm (n)}^{(m+1)})$ in $x$ for each $n \in \mathbb{N}$.
Moreover, we can estimate them by obtaining bounds similar to (\ref{bounds-1-Y-der}) and (\ref{bounds-1-Z-der}).
By using (\ref{matrix-P-further}), we obtain
\begin{eqnarray}
\nonumber
\| \partial_x Y^{(m+1)}_{\pm (n)}\| & \leq & \frac{(2 m+3) C_{\varphi}}{1 - \frac{2 m+3}{4} C_{\varphi} T}
\left(1 + \frac{3T}{4} \| \partial_x Z_{\pm (n)}^{(m+1)} \|\right) \\
\label{Y-bounds}
& \leq & \frac{20 (2 m+3) C_{\varphi}}{19} \left( 1 + \frac{3T}{4} \| \partial_x Z^{(m+1)}_{\pm (n)}\| \right),
\end{eqnarray}
where we have used (\ref{constraint-time-after}) as well as $2m+3 \leq 3(m+1)$.
By using (\ref{constraint-time-after}), (\ref{matrix-P-further}), (\ref{Y-bounds}),
$2m+3 \leq 3(m+1)$, and $m+2 \leq 2(m+1)$, we obtain
\begin{eqnarray}
\nonumber
\| \partial_x Z^{(m+1)}_{\pm (n)}\| & \leq &
\frac{(m+2) C_{\varphi}}{1 - \frac{15 (m+2)}{19} C_{\varphi} T} \left(1 + \frac{5 (2 m+3)}{19} C_{\varphi} T \right) \\
& \leq & \frac{20}{17} (m+2) C_{\varphi} \leq (2m+3) C_{\varphi}. \label{Z-bounds}
\end{eqnarray}
Substituting (\ref{Z-bounds}) to (\ref{Y-bounds}), we
obtain
\begin{eqnarray}
\nonumber
\| \partial_x Y^{(m+1)}_{\pm (n)}\|
& \leq & \frac{20 (2m+3) C_{\varphi}}{19} \left( 1 + \frac{3 (2m+3)}{4} C_{\varphi} T \right) \\
& \leq & \frac{23 (2m+3)}{19} C_{\varphi} \leq (3m+4) C_{\varphi}.
\label{YY-bounds}
\end{eqnarray}
By the induction method, we obtain that bounds (\ref{bound-on-derivatives-proof})
are valid for every $n \in \mathbb{N}$.

Convergence of the successive approximations $\{ Z_{\pm (n)}^{(m+1)}, Y_{\pm (n)}^{(m+1)} \}_{n \in \mathbb{N}}$  at the first level of
Picard iterations is proved in $C^{0,0,0}(\Gamma_T)$ similarly to the proof of Lemma \ref{lemma-local} (Step 3).
Since the sequence $\{ Z_{\pm (n)}^{(m+1)}, Y_{\pm (n)}^{(m+1)} \}_{n \in \mathbb{N}}$ satisfies
the bounds (\ref{bound-on-approximations-after-proof}) that are independent of $n$,
the limiting functions $(Z_{\pm}^{(m+1)},Y_{\pm}^{(m+1)}) \in C^{0,0,0}(\Gamma_T)$
satisfy the first two bounds in (\ref{bound-on-approximations-after}).

Although the bounds (\ref{bound-on-derivatives-proof}) are independent of $n$, we still need to
prove that $(Z_{\pm}^{(m+1)},Y_{\pm}^{(m+1)})$ belong to $C^{0,0,1}(\Gamma_T)$.
We hence follow the proof of Lemma \ref{lemma-implicit} and obtain
$(Z_{\pm}^{(m+1)},Y_{\pm}^{(m+1)}) \in C^{0,0,1}(\Gamma_T)$
together with the bounds
\begin{equation}
\label{bound-on-derivatives-proof-non-sharp}
\| \partial_x Z^{(m+1)}_+ \| + \| \partial_x Z^{(m+1)}_- \| \le 15 (m+1) C_\varphi, \quad
\| \partial_x Y^{(m+1)}_+ \| + \| \partial_x Y^{(m+1)}_- \|\le 45 (m+1) C_\varphi.
\end{equation}
Although the bounds (\ref{bound-on-derivatives-proof-non-sharp}) are bigger than bounds
(\ref{bound-on-derivatives-proof}), which are independent of $n$,
they are sufficient to control the local solution $(Z_{\pm}^{(m+1)},Y_{\pm}^{(m+1)})$ on $\Gamma_T$.
In particular, the characteristic coordinates are still invertible in $x$, because
the integral part of (\ref{coordinate-1-2-der-sol-rel}) is estimated as follows:
$$
\frac{1}{4} T \left( 3 \| \partial_x Z_{\pm}^{(m+1)} \| + \| \partial_x Y_{\pm}^{(m+1)} \| \right) \leq
\frac{3}{4} 15 (m+1) C_{\varphi} T \leq \frac{3}{4}.
$$
As a result, for the local solution in $(Z_{\pm}^{(m+1)},Y_{\pm}^{(m+1)}) \in C^{0,0,1}(\Gamma_T)$,
we still have $\xi_{\pm}(s;t,x) > 0$ for every $(s,t,x) \in \Gamma_T$.

The proof of Lemma \ref{lemma-local-improved} applies verbatim, so that
we actually have  $(Z_{\pm}^{(m+1)},Y_{\pm}^{(m+1)}) \in C^{1,1,1}(\Gamma_T)$.

Finally, we improve the bounds (\ref{bound-on-derivatives-proof-non-sharp})
by using the technique in Lemma \ref{lemma-bound-on-derivative}. In particular,
we have $\partial_x Z_{\pm}^{(m+1)}(s;t,x) \geq 0$ and $\partial_x Y_{\pm}^{(m+1)}(s;t,x) \geq 0$,
and $\xi_{\pm}(s;t,x) \leq 1$ for every $(s,t,x) \in \Gamma_T$. As a result,
the integral equations (\ref{der-eq-1})--(\ref{der-eq-2}) imply the bounds
$$
\| \partial_x Z_{\pm}^{(m+1)} \| \leq (m+1) C_{\varphi} + C_h T \leq (m+2) C_{\varphi}, \quad
\| \partial_x Y_{\pm}^{(m+1)} \| \leq \| \partial_x Z_{\mp}^{(m+1)} \| \leq (m+2) C_{\varphi},
$$
which yields the last two bounds in (\ref{bound-on-approximations-after}).
\end{proof}

With Lemma \ref{lemma-continuation}, we finally extend the local solution to every $T > 0$ and thus
prove Theorem \ref{theorem-main-global}. By Lemma \ref{lemma-continuation} and the induction method, we construct a sequence
of local solutions $\{ (Z_{\pm}^{(m)},Y_{\pm}^{(m)}) \}_{m \in \mathbb{N}} \in C^{1,1,1}(\Gamma_{T_m-T_{m-1}})$
to the system of integral equations (\ref{2.11})--(\ref{2.14}). The sequence is extended to
the time $T_m$, which is obtained from (\ref{constraint-time-after}) as
\begin{equation}
T_m = \sum_{k=1}^m T_k - T_{k-1} =  \sum_{k=1}^m \frac{1}{15 k C_\varphi},
\end{equation}
where we assumed $C_h \leq 15 C_{\varphi}^2$ for simplicity.
Since the harmonic series $\sum_{k=1}^{\infty} \frac{1}{k}$ diverges, the sequence of local
solutions is extended to arbitrary time $T > 0$ by incrementing the values of $m$.

By Proposition \ref{prop-correspondence}, we obtain the classical solution to system (\ref{sym-system})
by $z_{\pm}(t,x) = Z_{\pm}(t;t,x)$ for every $(t,x) \in \Gamma_T$ and every $T > 0$.
Using the transformation formulas (\ref{RH-inverse}), we obtain the classical solution
$(u,\eta)$ to the shallow water system (\ref{shallow-water}).
Thus, the proof of Theorem \ref{theorem-main-global} is complete.

\end{document}